\documentclass{amsart}

\usepackage[pagebackref]{hyperref}
\usepackage{cite}
\usepackage[all]{xy}
\usepackage{enumerate}
\usepackage{mathrsfs}
\usepackage{physics}
\usepackage{color}
\usepackage{tikz}
\usepackage{microtype} 
\usepackage{geometry}
\geometry{left=2cm,right=2cm}

\newtheorem{theorem}{Theorem}[section]
\newtheorem{lemma}[theorem]{Lemma}

\newtheorem{proposition}[theorem]{Proposition}

\theoremstyle{definition}
\newtheorem{definition}[theorem]{Definition}
\newtheorem{example}[theorem]{Example}

\newtheorem{remark}{Remark}[section]

\newtheorem*{note}{Note}

\numberwithin{equation}{section}

\def\RR{\mathbb{R}}
\def\CC{\mathbb{C}}
\def\ii{\sqrt{-1}}
\def\NN{\mathcal{N}}
\def\cc{\widetilde{\nabla }}
\def\ttau{\widetilde{\tau }}
\def\grad{\mathrm{grad\,}}
\def\Hess{\mathrm{Hess\,}}

\def\Div{\mathrm{div\,}}
\def\Ker{\mathrm{Ker\,}}
\def\Im{\mathrm{Im\,}}
\def\tr{\mathrm{tr\,}}

\allowdisplaybreaks[4]

\begin{document}

\title[Hermitian pluriharmonic maps]{Hermitian pluriharmonic maps between almost Hermitian manifolds}


\author{Guangwen Zhao}
\address{School of Mathematics and Statistics, Wuhan University of Technology, Wuhan 430070, China}
\curraddr{}
\email{gwzhao@whut.edu.cn}
\thanks{This work is partially supported by the National Natural Science Foundation of China (12001410)}


\subjclass[2020]{53C43, 32Q60}

\keywords{Hermitian pluriharmonic map, holomorphic map, almost Hermitian manifold}


\dedicatory{}

\begin{abstract}
    In the case where both the domain and target manifolds are almost Hermitian, we introduce the concept of Hermitian pluriharmonic maps. We prove that any holomorphic or anti-holomorphic map between almost Hermitian manifolds is Hermitian pluriharmonic. We also establish some monotonicity formulae for the partial energies of Hermitian pluriharmonic maps into K\"ahler manifolds. As an application, under appropriate assumptions on the growth of the partial energies, some holomorphicity results are proven.
\end{abstract}

\maketitle

\tableofcontents

\section{Introduction} 
\label{sec:introduction}

Harmonic maps from K\"ahler manifolds are a powerful tool for studying the rigidity of K\"ahler manifolds, with their holomorphicity being crucial. In \cite{MR584075,MR658472}, Siu studied the holomorphicity of harmonic maps from compact K\"ahler manifolds into compact K\"ahler manifolds with strongly negative curvature or compact quotients of irreducible symmetric bounded domains. He used his $\partial \bar \partial $-Bochner formula and integration by parts to prove a vanishing theorem, which implies that, in this case, harmonic maps are actually pluriharmonic and some curvature terms of the pull-back complexified tangent bundles vanish. This forces the map to be either holomorphic or anti-holomorphic, provided the map has sufficiently high rank. Later, Sampson \cite{MR833809} extended Siu's work by considering target manifolds as Riemannian manifolds with nonpositive Hermitian curvature, proving that in this case, harmonic maps are also pluriharmonic.

When the domain is a complete non-compact K\"ahler manifold, the $\partial \bar \partial $-Bochner technique combined with integration by parts becomes ineffective for studying the holomorphicity of harmonic maps. In this case, the stress-energy tensor related to partial energy density becomes a powerful tool. Dong \cite{MR3149846} used this tool to establish monotonicity formulae for (pluri)harmonic maps originating from complete K\"ahler manifolds equipped with a special exhaustion function, particularly under certain radial curvature conditions. This led to results on holomorphicity under suitable growth conditions for partial energy. Later, Li \cite{MR3459969} improved Dong's results under one of the radial curvature conditions mentioned above.

Similarly, to study the rigidity of the geometric structure of Hermitian manifolds, one needs to consider the case where the domain manifold is Hermitian. To use harmonic maps more effectively in the study of Hermitian manifolds, as in the K\"ahler case, it is necessary for the harmonic maps to be compatible with the complex structure of the domain manifold. When the domain manifold is a non-K\"ahler Hermitian manifold, ordinary harmonic maps typically do not possess this property. To address this, Jost and Yau \cite{MR1226528} introduced Hermitian harmonic maps from Hermitian manifolds into Riemannian manifolds. These maps are compatible with the complex structure of the domain manifold, meaning that when the target manifold is a K\"ahler manifold, a holomorphic map is necessarily Hermitian harmonic. Therefore, compared with ordinary harmonic maps, Hermitian harmonic maps are more suitable for studying Hermitian manifolds. Later, Hermitian harmonic maps were studied by some geometers, such as Chen \cite{MR1301014}, Ni \cite{MR1718630}, Grunau et al. \cite{MR2115446} and Li and Zhang \cite{MR2296630}. Continuing along Dong's work, Yang et al. \cite{MR3047049} studied the monotonicity formulae and holomorphicity of Hermitian harmonic maps from complete Hermitian manifolds into K\"ahler manifolds. In addition, Liu and Yang \cite{MR3275649} studied several different types of harmonic maps from Hermitian manifolds and provided some existence and holomorphicity results. 

The above mainly introduces the geometry and analysis of (generalized) harmonic maps from K\"ahler or Hermitian manifolds. A natural question arises: \emph{when both the domain and target manifolds are almost Hermitian, how should one define harmonic maps that are compatible with the almost complex structures of both?} Using the second canonical connection, Zhang \cite{MR3077216} defined Hermitian harmonic maps between almost Hermitian manifolds and proved that any (anti-)holomorphic map between almost Hermitian manifolds must be Hermitian harmonic. When the domain manifold is almost K\"ahler and the target manifold is K\"ahler, the Hermitian harmonic map degenerates into a harmonic map. In this case, Li and Zhang \cite{li-zhang} proved a holomorphicity result. They also derived a strong rigidity result for compact almost K\"ahler manifolds using a generalized Bochner formula.

Guided by Zhang \cite{MR3077216} and inspired by the research of some previously mentioned works, this paper defines Hermitian pluriharmonic maps between almost Hermitian manifolds using the second canonical connection (see Section~\ref{sec:basic_concepts}). We prove that any (anti-)holomorphic map between almost Hermitian manifolds must be Hermitian pluriharmonic (see Section~\ref{sec:pluriharmonicity_of_holomorphic_maps}). Additionally, following the work of \cite{MR3149846} and \cite{MR3047049}, we extend and partially refine their results under the same radial curvature conditions by selecting more suitable gradient vector fields $X$ in integral formula \eqref{eq-r}. We establish monotonicity formula for the partial energies of Hermitian pluriharmonic maps from complete almost Hermitian manifolds into K\"ahler manifolds (see Section~\ref{sec:monotonicity_formulae_for_hermitian_pluriharmonic_maps}). Furthermore, under appropriate growth assumptions of the partial energies, we obtain holomorphicity results (see Section~\ref{sec:holomorphicity_of_hermitian_pluriharmonic_maps}).


\section{Basic concepts} 
\label{sec:basic_concepts}

Let $(M^{2d},J)$ be an almost complex manifold with $\dim_{\RR}M=2d$. The Nijenhuis tensor $\NN_J$ of $(M,J)$ (torsion of $J$) is a tensor field of type $(1,2)$ given by
\[
	4\NN_J(X,Y)=[X,Y]+J[X,JY]+J[JX,Y]-[JX,JY],\quad \mbox{for}\ X,Y\in \Gamma(TM).
\]
A Hermitian metric $g$ on $(M,J)$ is a $J$-invariant Riemannian metric, i.e., 
\[
	g(JX,JY)=g(X,Y)
\]
for any $X,Y\in \Gamma(TM)$. An almost Hermitian manifold $(M,J,g)$ is an almost complex manifold $(M,J)$ with a Hermitian metric $g$. For any $p\in M$, denote by $T_pM^\CC=T_pM\otimes_\RR \CC$ the complexified tangent space of $M$ at $p$, and then one has the complexified tangent bundle $TM^\CC$. We can extend the almost complex structure $J$ and the Hermitian metric $g$ to $TM^\CC$ by $\CC$-linearity. The complexified tangent space $T_pM^\CC$ can be decomposed into a direct sum of the eigenspace of $J$: 
\[
	T_pM^\CC=T_p^{1,0}M\oplus T_p^{0,1}M,
\]
where $T_p^{1,0}M=\{X-\ii JX:X\in T_pM\}$ and $T_p^{0,1}M=\overline{T_p^{1,0}M}$. We also have the corresponding bundles $T^{1,0}M$ and $T^{0,1}M$.

Let $\omega(\cdot ,\cdot )=g(J\cdot ,\cdot )$ be the fundamental $(1,1)$-form of the almost Hermitian manifold $(M,J,g)$. Refer to \cite{MR184185}, an almost Hermitian manifold $(M,J,g,\omega )$ is called 
\begin{description}
	\item[K] K\"ahler if $\nabla J=0$;
	\item[AK] almost K\"ahler if $\dd \omega =0$;
	\item[NK] nearly K\"ahler if $(\nabla_XJ)Y+(\nabla_YJ)X=0$ for any $X,Y\in \Gamma(TM)$; 
	\item[QK] quasi-K\"ahler if $(\nabla_XJ)Y+(\nabla_{JX}J)JY=0$ for any $X,Y\in \Gamma(TM)$;
	\item[SK] semi-K\"ahler if $\delta J=0$;
	\item[H] Hermitian if $\NN_J=0$.    
\end{description}
It is also pointed out in \cite{MR184185} that 
\begin{equation}\label{eq-2c}
	\bf K\subset AK\subset QK\subset SK,\quad 
	K\subset AK\subset QK\subset SK,\quad 
	K\subset H,\quad 
	K=H\cap QK=AK\cap NK.
\end{equation}

\subsection{Hermitian pluriharmonic maps between almost Hermitian manifolds} 
\label{sub:hermitian_pluriharmonic_maps}

The second canonical connection $\cc $ on an almost Hermitian manifold $(M,J,g,\omega )$ (cf. \cite{MR1456265}) is given by
\begin{equation}\label{eq-c}
	\begin{split}
		\langle {\cc}_XY,Z\rangle =\langle \nabla_XY,Z\rangle-\frac{1}{2}\langle J(\nabla_XJ)Y,Z\rangle 
        +\frac{1}{4}\left\{\langle X,(\nabla_{JY}J)Z+J(\nabla_YJ)Z\rangle -\langle X,(\nabla_{JZ}J)Y+J(\nabla_ZJ)Y\rangle \right\}
	\end{split}
\end{equation}
for any $X,Y,Z\in \Gamma(TM)$. The second canonical connection is the unique connection satisfying the following three condition
\[
	\cc g=0,\quad \cc J=0,\quad (\tau^{\cc})^{1,1}=0,
\]
where $(\tau^{\cc})^{1,1}$ is the $J$-invariant part of the torsion of $\cc $.

Using the second canonical connections, Zhang \cite{MR3077216} defined the following Hermitian harmonic map.
\begin{definition}[cf. \cite{MR3077216}]\label{defn-a}
	A smooth map $u:(M,J,g)\to (N,J^N,h)$ is called a Hermitian harmonic map between almost Hermitian manifolds, if it satisfies
    \[
	\widetilde{\tau }(u)=0,
    \]
    where $\widetilde{\tau }(u)=\tr_g\cc \dd u
    $ is the Hermitian tension field, and $\cc \dd u(X,Y)={\cc}^N_{\dd u(X)}(\dd u(Y))-\dd u\left(\cc_XY\right)$.
\end{definition}

\begin{remark}\label{rem-a}
	Let $\{e_A\}_{A=1}^{2m}=\{e_\alpha ;Je_\alpha \}_{\alpha =1}^m$ is the local orthonormal frame field on the domain manifold $(M^{2m},g,J)$. Set
	\[
	E_\alpha =\frac{1}{\sqrt{2}}(e_\alpha -\ii Je_\alpha ),\quad \bar{E}_\alpha =\frac{1}{\sqrt{2}}(e_\alpha +\ii Je_\alpha ),
	\]
    then $\{E_\alpha \}_{\alpha =1}^m$ is the local unitary frame field on $M$. Therefore,
    \[
    \ttau(u)=\tr_g\cc \dd u=\sum_{A=1}^{2m}\cc \dd u(e_A,e_A)=\sum_{\alpha =1}^m\left[\cc \dd u(E_\alpha ,\bar{E}_\alpha )+\cc \dd u(\bar{E}_\alpha ,E_\alpha )\right].
    \]
\end{remark}

Inspired by \cite{MR3077216}, we introduce the concept of Hermitian pluriharmonic maps between almost Hermitian manifolds by means of the second canonical connections.
\begin{definition}\label{defn-b}
    A smooth map $u:(M,J,g)\to (N,J^N,h)$ is said to be Hermitian pluriharmonic if $(\cc \dd u)^{1,1}=0$, that is,
    \begin{align}\label{eq-p}
	    \cc \dd u\left(Z,\overline{W}\right)=0
    \end{align}
    for any $Z,W\in \Gamma(T^{1,0}M)$. 	
\end{definition}

\begin{remark}\label{rem-b}
(i) It follows from Definition~\ref{defn-a} and Definition~\ref{defn-b} that any Hermitian pluriharmonic map is Hermitian harmonic. In fact, it is easy to see that equation \eqref{eq-p} is equivalent to 
\begin{align}
	\cc \dd u\left(Z,\overline{W}\right)+\cc \dd u\left(\overline{Z},W\right)=0
\end{align}
for any $Z,W\in \Gamma(T^{1,0}M)$.

(ii) By setting $Z=X-\ii JX,\ W=Y-\ii JY$, it is easy to know that a smooth map $u:(M,J,g)\to (N,J^N,h)$ is Hermitian pluriharmonic if and only if
\begin{equation}\label{eq-b}
	\cc \dd u(X,Y)+\cc \dd u(JX,JY)=0
\end{equation}
for any $X,Y\in \Gamma(TM)$. 
\end{remark}


\subsection{The stress-energy tensors} 
\label{sub:the_stress_energy_tensors}

This part can be found in \cite{MR3149846,MR584075,MR851976}. Let $u:(M,J,g)\to (N,J^N,h)$ be a smooth map between almost Hermitian manifolds. Define
\begin{align*}
	&\partial u:T^{1,0}M\to T^{1,0}N,\quad \bar \partial u:T^{0,1}M\to T^{1,0}N,\\
    &\partial \bar u:T^{1,0}M\to T^{0,1}N,\quad \bar \partial \bar u:T^{0,1}M\to T^{0,1}N
\end{align*}
by 
\[
	\dd u|_{T^{1,0}M}=\partial u+\partial \bar u,\quad \dd u|_{T^{0,1}M}=\bar \partial u+\bar \partial \bar u.
\]
Indeed, there has an orthogonal decomposition
\begin{equation}\label{eq-h}
	\dd u=\frac{1}{2}(\dd u+J^N\circ \dd u\circ J)+\frac{1}{2}(\dd u-J^N\circ \dd u\circ J).
\end{equation}
It is easy to see that $\bar \partial \bar u=\overline{\partial u}$ and $\partial \bar u=\overline{\bar \partial u}$. 
The map $u$ is said to be holomorphic\footnote{Since there are no complex coordinates in this case, some authors prefer to use the terms ``almost holomorphic" or ``pseudo-holomorphic". However, for convenience, we still use the term ``holomorphic" in this paper, which can also be found in \cite[(9.5)]{MR495450}.} (resp. anti-holomorphic) if 
\[
	\dd u\circ J=J^N\circ \dd u\quad (\mbox{resp.}\ \dd u\circ J=-J^N\circ \dd u).
\]
It follows from the previous decomposition that $u$ is holomorphic (resp. anti-holomorphic) if and only if $\bar\partial u=0\ (\mbox{resp.}\ \partial u=0)$. 

For a smooth map $u:(M,J,g)\to (N,J^N,h)$, from the decomposition \eqref{eq-h}, we introduce two $1$-forms $\sigma ,\sigma'\in A^1(u^{-1}TN)$ as follows:
\[
	\sigma(X)=\frac{\dd u(X)+J^N\dd u(JX)}{2}\quad \mbox{and}\quad \sigma'(X)=\frac{\dd u(X)-J^N\dd u(JX)}{2}
\]
for any $X\in \Gamma(TM)$. As in \cite{MR3149846}, one has
\[
	\sigma(JX)=-J^N\sigma(X)\quad \mbox{and}\quad \sigma'(JX)=J^N\sigma'(X).
\]
Thus, both $\sigma $ and $\sigma'$ are $J$-invariant:
\begin{align*}
	\langle \sigma(JX),\sigma(JY)\rangle =\langle \sigma(X),\sigma(Y)\rangle ,\quad 
	\langle \sigma'(JX),\sigma'(JY)\rangle =\langle \sigma'(X),\sigma'(Y)\rangle .
\end{align*}
In addition, it is easy to calculate $|\sigma |^2=2|\bar \partial u|^2, |\sigma' |^2=2|\partial u|^2$ and $|\dd u|^2=|\sigma |^2+|\sigma'|^2$. Hence, the energy density $e(u)=\frac{1}{2}|\dd u|^2=|\bar \partial u|^2+|\partial u|^2$, and then the energy
\begin{align*}
	E(u)=\int_Me(u)=\int_M|\bar \partial u|^2+\int_M|\partial u|^2.
\end{align*}
We call $E''(u)=\int_M|\bar \partial u|^2$ and $E'(u)=\int_M|\partial u|^2$ are the partial energies of $u$, respectively. 

Let $M$ be a Riemannian manifold and let $A^p(E)=\Gamma(\Lambda^pT^*M\otimes E)$ be the space of smooth $p$-form on $M$ with values in the Riemannian vector bundle $\pi :E\to M$. For $\omega \in A^p(E)$, define a symmetric $(0,2)$-tensor field $\omega \odot \omega $ by
\[
	\omega \odot \omega (X,Y)=\langle \iota_X\omega ,\iota_Y\omega \rangle ,\quad X,Y\in \Gamma(TM),
\]
where $\iota_X$ is the interior product by $X$. The stress-energy tensor of $\omega$ is given as follows:
\begin{align*}
	S_\omega=\frac{|\omega |^2}{2}g-\omega \odot \omega .
\end{align*}
Recall that for a $(0,2)$-tensor field $T\in \Gamma(T^*M\otimes T^*M)$, its divergence $\Div T\in \Gamma(T^*M)$ is given by
\[
	(\Div T)(X)=\sum_A(\nabla_{e_A}T)(e_A,X),\quad X\in \Gamma(TM).
\]
We have the following useful lemma.
\begin{lemma}[cf. \cite{MR3149846,MR851976,MR3047049}]\label{lem-d}
    $(\Div S_\omega )(X)=\langle \delta \omega ,\iota_X\omega \rangle +\langle \iota_X\dd \omega ,\omega \rangle $ for any $X\in \Gamma(TM)$.
\end{lemma}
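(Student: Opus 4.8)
The goal is to prove Lemma~\ref{lem-d}, the divergence formula $(\Div S_\omega)(X)=\langle \delta\omega,\iota_X\omega\rangle+\langle\iota_X\dd\omega,\omega\rangle$ for an $E$-valued $p$-form $\omega$, where $S_\omega=\tfrac{|\omega|^2}{2}g-\omega\odot\omega$. This is a purely tensorial identity, so the plan is to verify it by a direct computation in a suitable local frame, namely a geodesic (normal) orthonormal frame $\{e_A\}$ at an arbitrary point, so that $\nabla_{e_A}e_B=0$ there, together with an $E$-valued frame whose covariant derivatives also vanish at the point. Since both sides are tensorial in $X$, I may take $X$ to be one of the frame fields with $\nabla X=0$ at the point as well.

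First I would split $S_\omega$ into its two pieces and differentiate. For the scalar part, $\Div(\tfrac{|\omega|^2}{2}g)(X)=\sum_A (\nabla_{e_A}(\tfrac{|\omega|^2}{2}g))(e_A,X)=\tfrac12\, X(|\omega|^2)=\langle\nabla_X\omega,\omega\rangle$ at the point, using $\nabla g=0$ and the frame simplification. For the second part I compute $(\Div(\omega\odot\omega))(X)=\sum_A(\nabla_{e_A}(\omega\odot\omega))(e_A,X)$. Since $\omega\odot\omega(Y,Z)=\langle\iota_Y\omega,\iota_Z\omega\rangle$ and the frame is chosen so that $\iota_{e_A}$ and $\nabla$ interchange at the point, this becomes $\sum_A\big(\langle\iota_{e_A}\nabla_{e_A}\omega,\iota_X\omega\rangle+\langle\iota_{e_A}\omega,\iota_X\nabla_{e_A}\omega\rangle\big)$. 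The key now is to recognize the two standard first-order operators: $\sum_A \iota_{e_A}\nabla_{e_A}\omega=-\delta\omega$ (up to the sign convention in force; I would match it against the paper's convention so that the first term reads $-\langle\delta\omega,\iota_X\omega\rangle$ with a minus), while for the second term I would use the formula for the exterior differential $\dd\omega(e_A,Y_1,\dots,Y_p)=\sum_i(-1)^i(\nabla_{Y_i}\omega)(e_A,Y_1,\dots,\widehat{Y_i},\dots)$ plus the $\nabla_{e_A}\omega$ term, i.e.\ the Weitzenb\"ock-type reshuffling that expresses $\sum_A \langle\iota_{e_A}\omega,\iota_X\nabla_{e_A}\omega\rangle$ in terms of $\langle\nabla_X\omega,\omega\rangle$ and $\langle\iota_X\dd\omega,\omega\rangle$.

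Concretely, the heart of the argument is the pointwise algebraic identity
\[
\sum_A\langle \iota_{e_A}\omega,\ \iota_X\nabla_{e_A}\omega\rangle
=\langle\nabla_X\omega,\omega\rangle-\langle\iota_X\dd\omega,\omega\rangle,
\]
which follows by writing out both sides on $(p-1)$-tuples of frame vectors, expanding $\dd\omega$ as the alternating sum $\sum_{k=0}^{p}(-1)^k(\nabla_{\cdot}\omega)(\cdot,\dots)$ (valid for a torsion-free connection, again in the geodesic frame), and cancelling the off-diagonal terms by antisymmetry of $\omega$. Combining: $(\Div S_\omega)(X)=\langle\nabla_X\omega,\omega\rangle-\big[-\langle\delta\omega,\iota_X\omega\rangle+\langle\nabla_X\omega,\omega\rangle-\langle\iota_X\dd\omega,\omega\rangle\big]=\langle\delta\omega,\iota_X\omega\rangle+\langle\iota_X\dd\omega,\omega\rangle$, and since the left and right sides are tensors independent of the frame, the identity holds globally.

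The only genuine obstacle is bookkeeping rather than conceptual: getting all the signs and combinatorial factors right in the expansion of $\dd\omega$ and $\delta\omega$ and matching them to whatever normalization the paper uses for $\delta$ on $E$-valued forms (factor conventions differ by $(-1)^p$ or by a factor in the codifferential). I would fix conventions explicitly at the start — $\delta\omega=-\sum_A\iota_{e_A}\nabla_{e_A}\omega$ and $\dd\omega=\sum_A e^A\wedge\nabla_{e_A}\omega$ — verify the lemma's sign in one low-degree case ($p=1$, where it is classical), and then trust the general frame computation. Since the cited references (\cite{MR3149846,MR851976,MR3047049}) already contain this computation, I would present only the frame setup and the key algebraic identity, relegating the term-by-term expansion to a remark that it is a routine check.
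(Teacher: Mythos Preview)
The paper does not actually supply a proof of this lemma; it simply states the identity with a \emph{cf.} to \cite{MR3149846,MR851976,MR3047049} and moves on. Your proposal is a correct and standard way to establish it: the normal-frame computation, the identification $\delta\omega=-\sum_A\iota_{e_A}\nabla_{e_A}\omega$, and the key identity $\sum_A\langle\iota_{e_A}\omega,\iota_X\nabla_{e_A}\omega\rangle=\langle\nabla_X\omega,\omega\rangle-\langle\iota_X\dd\omega,\omega\rangle$ (which follows from $\dd\omega=\sum_A e^A\wedge\nabla_{e_A}\omega$, $\iota_X(e^A\wedge\alpha)=X^A\alpha-e^A\wedge\iota_X\alpha$, and the adjunction $\langle e^A\wedge\alpha,\omega\rangle=\langle\alpha,\iota_{e_A}\omega\rangle$) are exactly what the cited references do. So your approach matches the literature proofs that the paper defers to, and there is nothing to compare against in the paper itself.
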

Let $D$ be any bounded domain of $M$ with $C^1$ boundary. We have the following integral formula (cf. \cite{MR3149846,MR1391729})
\begin{align}\label{eq-r}
	\int_{\partial D}S_\omega (X,\nu )\dd v=\int_D\left\{\langle S_\omega ,\nabla X^\flat \rangle +(\Div S_\omega )(X)\right\}\dd v, 
\end{align}
where $\nu $ is the unit outward normal vector field along $\partial D$ in $D$, and $X^\flat$ is the dual $1$-from of $X$ and $\nabla X^\flat $ is given by $(\nabla X^\flat )(Y,Z)=\langle \nabla_YX,Z\rangle $ for $Y,Z\in \Gamma(TM)$.


\begin{note}
\emph{For ease of exposition, this paper focuses only on topics such as the Hermitian pluriharmonicity of holomorphic maps, the monotonicity formula of the partial energy $E''$, and the holomorphicity of Hermitian pluriharmonic maps. Similarly, for topics related to the Hermitian pluriharmonicity of of anti-holomorphic maps, the monotonicity formula of the partial energy $E'$, and the anti-holomorphicity of Hermitian pluriharmonic maps, it is simply required to replace the almost complex structure $J^N$ of the target manifold $N$ with $-J^N$.}

\emph{Throughout this paper, the Levi--Civita connections on $M$ and $N$ are denoted by $\nabla $ and $\nabla^N$, respectively, and $\nabla^u$ the induced connection on $T^*M\otimes u^{-1}TN$. Similarly, the second canonical connections on $M,$ and $N$ are denoted as $\cc$ and $\cc^N$, respectively. We consistently use $\{e_A\}_{A=1}^{2m}=\{e_\alpha ;Je_\alpha \}_{\alpha =1}^m$ to represent a local orthonormal frame field on the domain manifold $(M,J,g)$. Additionally, for convenience, we denote by $\langle \ ,\ \rangle $ the inner products on all bundles.}
\end{note}


\section{Hermitian pluriharmonicity of holomorphic maps} 
\label{sec:pluriharmonicity_of_holomorphic_maps}

In \cite{MR3077216}, Zhang proved that any holomorphic map between almost Hermitian manifolds must be Hermitian harmonic. Furthermore, we can also prove the Hermitian pluriharmonicity of holomorphic maps. Precisely, we have
\begin{theorem}\label{thm-a}
 	Any holomorphic map between almost Hermitian manifolds must be Hermitian pluriharmonic.
\end{theorem}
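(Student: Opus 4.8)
The plan is to show that for a holomorphic map $u$, the $(1,1)$-part of $\cc\dd u$ vanishes by working directly with the defining equation \eqref{eq-p} applied to vectors of type $(1,0)$. Since $u$ is holomorphic, $\bar\partial u=0$, which means $\dd u$ maps $T^{1,0}M$ into $T^{1,0}N$ and $T^{0,1}M$ into $T^{0,1}N$; equivalently $\dd u\circ J=J^N\circ\dd u$. First I would recall that the second canonical connection $\cc$ (on both $M$ and $N$) satisfies $\cc J=0$ and $\cc g=0$, so $\cc$ preserves the type decomposition $TM^\CC=T^{1,0}M\oplus T^{0,1}M$: if $Z\in\Gamma(T^{1,0}M)$ then $\cc_WZ\in\Gamma(T^{1,0}M)$ for every $W$, and similarly on $N$. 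This is the structural fact that makes the computation clean.

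Next, I would unwind $\cc\dd u(Z,\overline W)={\cc}^N_{\dd u(Z)}(\dd u(\overline W))-\dd u(\cc_Z\overline W)$ for $Z,W\in\Gamma(T^{1,0}M)$. Because $u$ is holomorphic, $\dd u(Z)\in\Gamma(T^{1,0}N)$ and $\dd u(\overline W)\in\Gamma(T^{0,1}N)$. Since $\cc^N$ preserves types, ${\cc}^N_{\dd u(Z)}(\dd u(\overline W))$ lies in $\Gamma(T^{0,1}N)$. On the other hand $\cc_Z\overline W$: here I need to be careful — $\cc_Z\overline W$ is of type $(0,1)$ on $M$ (since $\cc$ preserves types and $\overline W$ is type $(0,1)$), so $\dd u(\cc_Z\overline W)\in\Gamma(T^{0,1}N)$ as well. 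So both terms are genuinely of type $(0,1)$ in $N$; this alone does not yet give vanishing. The actual cancellation must come from comparing with the holomorphicity condition written infinitesimally. The cleanest route: use the characterization \eqref{eq-b}, i.e. show $\cc\dd u(X,Y)+\cc\dd u(JX,JY)=0$ for all real $X,Y$. Expand each term using $\dd u\circ J=J^N\circ\dd u$ and the facts $\cc^N J^N=0$, $\cc J=0$. Then $\cc\dd u(JX,JY)={\cc}^N_{\dd u(JX)}(\dd u(JY))-\dd u(\cc_{JX}JY)={\cc}^N_{J^N\dd u(X)}(J^N\dd u(Y))-\dd u(J\cc_X Y)$, using $\cc$-parallelism of $J$ to move $J$ outside. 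Now $J^N$ being $\cc^N$-parallel lets me pull $J^N$ out of the first term: ${\cc}^N_{J^N\dd u(X)}(J^N\dd u(Y))=J^N\bigl({\cc}^N_{J^N\dd u(X)}\dd u(Y)\bigr)$.

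The remaining obstacle — and the step I expect to be the crux — is handling the dependence of $\cc^N$ on the first (direction) argument: I have ${\cc}^N_{J^N\dd u(X)}$ versus ${\cc}^N_{\dd u(X)}$, which differ by the direction being rotated by $J^N$. To reconcile these I will need to use the $(1,1)$-torsion condition $(\tau^{\cc^N})^{1,1}=0$, which relates $\cc^N_V W-\cc^N_W V$ to $[V,W]$ and controls how $\cc^N_{J^NV}W$ compares to $\cc^N_V(J^NW)$; concretely, the vanishing of the $J^N$-invariant part of the torsion is exactly the condition that makes $\cc^N\dd u$ "holomorphic-compatible" on the $(1,1)$-component. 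Alternatively — and this may be the slicker path — I would avoid the direction issue entirely by noting that $\cc\dd u$ is a section of $T^*M\otimes T^*M\otimes u^{-1}TN$ and its $(1,1)$-part is tensorial, so it suffices to evaluate \eqref{eq-p} at a single point $p$ using a frame $\{E_\alpha\}$ that is $\cc$-parallel at $p$ (so $\cc_\cdot E_\alpha|_p=0$), reducing the identity to ${\cc}^N_{\dd u(E_\alpha)}(\dd u(\overline{E_\beta}))|_p$; holomorphicity gives $\dd u(E_\alpha)\in T^{1,0}N$ and $\dd u(\overline{E_\beta})\in T^{0,1}N$, and then I invoke that the mixed $(1,1)$-component of $\cc^N$ applied in this way, together with the fact that holomorphicity means $\dd u$ intertwines the $\bar\partial$-operators induced by $\cc$ and $\cc^N$, forces it to vanish. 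In writing this up I would lean on Zhang's computation in \cite{MR3077216} for the Hermitian harmonic case and simply observe that his argument actually establishes the stronger pointwise vanishing of every $(1,1)$-component, not merely its trace; the trace was all that was needed there, but the same local computation — choosing normal frames for $\cc$ and $\cc^N$ and using $\bar\partial u=0$ — yields \eqref{eq-p} term by term. The main work is therefore bookkeeping with the second canonical connection's defining properties rather than any deep new idea.
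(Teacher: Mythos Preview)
Your approach is different from the paper's and is in principle cleaner, but there is a genuine gap. You correctly observe that $\cc\dd u(Z,\overline{W})\in T^{0,1}N$ (by type preservation of $\cc,\cc^N$ and holomorphicity of $u$), and you correctly flag that this alone does not give vanishing. However, neither of your proposed fixes closes the argument: the normal-frame reduction still leaves you with $\cc^N_{\dd u(E_\alpha)}(\dd u(\overline{E_\beta}))|_p$, which has no reason to vanish from type considerations alone; and appealing to Zhang's computation is not a proof. There is also a slip in your real computation: $\cc$-parallelism of $J$ gives $\cc_{JX}(JY)=J\cc_{JX}Y$, not $J\cc_X Y$, so the direction problem you isolate for $\cc^N$ is equally present on the $M$ side. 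The missing idea is this: the $(1,1)$-torsion conditions on \emph{both} $M$ and $N$ say $\tau^{\cc}(Z,\overline{W})=0$ and (via holomorphicity) $\tau^{\cc^N}(\dd u(Z),\dd u(\overline{W}))=0$; combined with the general identity
\[
\cc\dd u(X,Y)-\cc\dd u(Y,X)=\tau^{\cc^N}(\dd u(X),\dd u(Y))-\dd u\bigl(\tau^{\cc}(X,Y)\bigr),
\]
this yields the \emph{symmetry} $\cc\dd u(Z,\overline{W})=\cc\dd u(\overline{W},Z)$. Since the left side lies in $T^{0,1}N$ and the right side (by the same type argument with the roles of $Z$ and $\overline{W}$ swapped) lies in $T^{1,0}N$, both are zero. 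That one line is the whole proof once you see it.

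The paper takes an entirely different route: it expands $\cc\dd u(X,Y)+\cc\dd u(JX,JY)$ via the explicit formula \eqref{eq-c} for $\cc$ in terms of the Levi--Civita connection and, after a lengthy manipulation together with Lemma~\ref{lem-b}, arrives at the identity
\[
\cc\dd u(X,Y)+\cc\dd u(JX,JY)=2\bigl\{\dd u(\NN_J(X,Y))-\NN_{J^N}(\dd u(X),\dd u(Y))\bigr\},
\]
which vanishes because holomorphic maps intertwine the Nijenhuis tensors. Your intrinsic approach, once completed as above, avoids all of that computation and makes transparent why the second canonical connection is the right one; the paper's computation, on the other hand, produces an explicit Nijenhuis-tensor formula valid before imposing pluriharmonicity, which is of independent interest.
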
 

To prove this theorem, we first give a simple lemma whose proof is a straightforward calculation. 
\begin{lemma}\label{lem-b}
	Let $(M,J,g)$ be an almost Hermitian manifold and $\nabla $ its Levi--Civita connection. Then
	\[
	J(\nabla_XJ)Y-(\nabla_{JX}J)Y-J(\nabla_YJ)X+(\nabla_{JY}J)X=4\NN_J(X,Y).
	\]
\end{lemma}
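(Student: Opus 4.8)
The plan is to reduce everything to the Levi--Civita connection $\nabla$ and use just two facts about it: it is torsion-free, so $[U,V]=\nabla_UV-\nabla_VU$ for all $U,V\in\Gamma(TM)$, and it satisfies the Leibniz rule $(\nabla_UJ)V=\nabla_U(JV)-J\nabla_UV$; together with the identity $J^2=-\mathrm{Id}$ these will be all we need. First I would substitute the torsion-free expression for each of the four Lie brackets in the definition of $\NN_J$, which gives
\[
4\NN_J(X,Y)=\nabla_XY-\nabla_YX+J\big(\nabla_X(JY)-\nabla_{JY}X\big)+J\big(\nabla_{JX}Y-\nabla_Y(JX)\big)-\nabla_{JX}(JY)+\nabla_{JY}(JX).
\]

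Next I would expand the four summands on the left-hand side of the asserted identity using the Leibniz rule and $J^2=-\mathrm{Id}$: for instance $J(\nabla_XJ)Y=J\nabla_X(JY)+\nabla_XY$, while $-(\nabla_{JX}J)Y=-\nabla_{JX}(JY)+J\nabla_{JX}Y$, $-J(\nabla_YJ)X=-J\nabla_Y(JX)-\nabla_YX$, and $(\nabla_{JY}J)X=\nabla_{JY}(JX)-J\nabla_{JY}X$. Adding these four expressions produces exactly the eight terms (with appropriate signs) appearing in the display above, so the two sides agree term by term and the lemma follows.

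The computation is entirely mechanical; the only point requiring care is keeping track of the signs generated by the repeated use of $J^2=-\mathrm{Id}$ and by the antisymmetrization in $X$ and $Y$. I do not expect any real obstacle here — as the lemma's statement itself indicates, the identity is best verified simply by writing out both sides in full.
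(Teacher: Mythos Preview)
Your proposal is correct and is exactly the kind of direct computation the paper has in mind; indeed, the paper gives no proof beyond the remark that it is ``a straightforward calculation.'' Your expansions of both sides via $[U,V]=\nabla_UV-\nabla_VU$, $(\nabla_UJ)V=\nabla_U(JV)-J\nabla_UV$, and $J^2=-\mathrm{Id}$ match term by term, so there is nothing to add.
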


\begin{proof}[Proof of Theorem~\ref{thm-a}]
	We only consider the holomorphic case, and the anti-holomorphic case is similar. Let $u:(M,J,g)\to (N,J^N,h)$ be a holomorphic map between almost Hermitian manifolds. For any $X,Y\in \Gamma(TM)$, we have  
    \begin{align}\label{eq-k}
        \begin{split}
    	    \cc \dd u(X,Y)+\cc \dd u(JX,JY)
		    =&{\cc}^N_{\dd u(X)}(\dd u(Y))+{\cc}^N_{\dd u(JX)}(\dd u(JY))
		    -\dd u\left(\cc_XY+\cc_{JX}(JY)\right)\\
	    	=&{\cc}^N_{\dd u(X)}(\dd u(Y))+{\cc}^N_{J^N\dd u(X)}(J^N\dd u(Y))
	    	-\dd u\left(\cc_XY+\cc_{JX}(JY)\right).
        \end{split}
	\end{align}
	From \eqref{eq-c}, a simple calculation yields
	\begin{align}\label{eq-l}
	    \begin{split}
		    \cc_XY+\cc_{JX}(JY)
	        =\nabla_XY+\nabla_{JX}(JY)-\frac{1}{2}J(\nabla_XJ)Y-\frac{1}{2}(\nabla_{JX}J)Y-\frac{1}{2}J(\nabla_YJ)X-\frac{1}{2}(\nabla_{JY}J)X.
	    \end{split}
	\end{align}
	Similarly, we also have
	\begin{align}\label{eq-m}
		\begin{split}
		    {\cc}^N_{\dd u(X)}(\dd u(Y))+{\cc}^N_{J^N\dd u(X)}(J^N\dd u(Y))
	        =&\nabla^N_{\dd u(X)}(\dd u(Y))+\nabla^N_{J^N\dd u(X)}(J^N\dd u(Y))\\
	        &-\frac{1}{2}J^N\left(\nabla^N_{\dd u(X)}J^N\right)\dd u(Y)-\frac{1}{2}\left(\nabla^N_{J^N\dd u(X)}J^N\right)(\dd u(Y))\\
	        &-\frac{1}{2}J^N\left(\nabla^N_{\dd u(Y)}J^N\right)\dd u(X)-\frac{1}{2}\left(\nabla^N_{J^N\dd u(Y)}J^N\right)(\dd u(X)).
	    \end{split}
	\end{align}
	On the other hand, 
    \begin{align}\label{eq-j}
        \begin{split}
    	    \nabla^N_{J^N\dd u(X)}(J^N\dd u(Y))
    	    =&\left(\nabla^N_{J^N\dd u(X)}J^N\right)(\dd u(Y))+J^N\left(\nabla^u_{JX}\dd u\right)(Y)+J^N\dd u\left(\nabla_{JX}Y\right)\\
    	    =&\left(\nabla^N_{J^N\dd u(X)}J^N\right)(\dd u(Y))+J^N\left(\nabla^u_Y\dd u\right)(JX)-J^N\dd u\left(\nabla_{JX}(J^2Y)\right),
        \end{split}
    \end{align}
    where 
    \begin{align*}
    	\begin{split}
    		J^N(\nabla^u_Y\dd u)JX
    		=&J^N\nabla^N_{\dd u(Y)}(\dd u(JX))-J^N\dd u\left(\left(\nabla_{Y}J\right)X\right)-J^N\dd uJ(\nabla_YX)\\
    		=&J^N\left(\nabla^N_{\dd u(Y)}J^N\right)(\dd u(X))-(\nabla^u_Y\dd u)(X)-\dd u\left(\nabla_YX\right)-J^N\dd u\left(\left(\nabla_{Y}J\right)X\right)+\dd u\left(\nabla_YX\right)\\
    		=&J^N\left(\nabla^N_{\dd u(Y)}J^N\right)(\dd u(X))-(\nabla^u_X\dd u)(Y)-J^N\dd u\left(\left(\nabla_{Y}J\right)X\right)\\
    		=&J^N\left(\nabla^N_{\dd u(Y)}J^N\right)(\dd u(X))-\nabla^N_{\dd u(X)}(\dd u(Y))+\dd u\left(\nabla_XY\right)-\dd u\left(J\left(\nabla_{Y}J\right)X\right)
    	\end{split}
    \end{align*}
    and
    \begin{align*}
    	J^N\dd u\left(\nabla_{JX}(J^2Y)\right)
    	=J^N\dd u\left(\left(\nabla_{JX}J\right)JY\right)+J^N\dd u\left(J\left(\nabla_{JX}(JY)\right)\right)
    	=\dd u\left(J\left(\nabla_{JX}J\right)JY\right)-\dd u\left(\nabla_{JX}(JY)\right).
    \end{align*}
    Hence \eqref{eq-j} becomes 
    \begin{align}\label{eq-n}
        \begin{split}
    	    \nabla^N_{\dd u(X)}(\dd u(Y))+\nabla^N_{J^N\dd u(X)}(J^N\dd u(Y))
    	    =&\left(\nabla^N_{J^N\dd u(X)}J^N\right)(J^N\dd u(Y))+J^N\left(\nabla^N_{\dd u(Y)}J^N\right)(\dd u(X))+\dd u\left(\nabla_XY\right)\\
    	    &-\dd u\left(J(\nabla_YJ)X\right)-\dd u\left(J(\nabla_{JX}J)Y\right)+\dd u\left(\nabla_{JX}(JY)\right).
    	\end{split}
    \end{align}
    Substituting \eqref{eq-n} into \eqref{eq-m}, we obtain
    \begin{align}\label{eq-o}
		\begin{split}
		    &{\cc}^N_{\dd u(X)}(\dd u(Y))+{\cc}^N_{J^N\dd u(X)}(J^N\dd u(Y))\\
	        =&\frac{1}{2}\left(\nabla^N_{J^N\dd u(X)}J^N\right)(\dd u(Y))-\frac{1}{2}J^N\left(\nabla^N_{\dd u(X)}J^N\right)(\dd u(Y))\\
	        &+\frac{1}{2}J^N\left(\nabla^N_{\dd u(Y)}J^N\right)(\dd u(X))-\frac{1}{2}\left(\nabla^N_{J^N\dd u(Y)}J^N\right)(\dd u(X))\\
	        &+\dd u\left(\nabla_XY\right)-\dd u\left(J(\nabla_YJ)X\right)-\dd u\left(J(\nabla_{JX}J)Y\right)+\dd u\left(\nabla_{JX}(JY)\right).
	    \end{split}
	\end{align}
	Substituting \eqref{eq-l} and \eqref{eq-o} into \eqref{eq-k}, we derive
	\begin{align*}
        \begin{split}
    	    &\cc \dd u(X,Y)+\cc \dd u(JX,JY)\\
    	    =&\frac{1}{2}\dd u\left\{J(\nabla_XJ)Y-(\nabla_{JX}J)Y-J(\nabla_YJ)X+(\nabla_{JY}J)X\right\}\\
		    &-\frac{1}{2}\left\{J^N\left(\nabla^N_{\dd u(X)}J^N\right)(\dd u(Y))-\left(\nabla^N_{J^N\dd u(X)}J^N\right)(\dd u(Y))-J^N\left(\nabla^N_{\dd u(Y)}J^N\right)(\dd u(X))+\left(\nabla^N_{J^N\dd u(Y)}J^N\right)(\dd u(X))\right\}.
        \end{split}
	\end{align*}
	From Lemma~\ref{lem-b}, this equation reduces to
    \begin{align*}
    	\cc \dd u(X,Y)+\cc \dd u(JX,JY)
    	=2\left\{\dd u(\NN_J(X,Y))-\NN_{J^N}(\dd u(X),\dd u(Y))\right\}.
    \end{align*}
    However, it is obvious that $\dd u(\NN_J(X,Y))=\NN_{J^N}(\dd u(X),\dd u(Y))$ since $u$ is holomorphic. Hence the conclusion that $u$ is Hermitian pluriharmonic follows from Remark~\ref{rem-b}, (ii).
\end{proof}


\section{Monotonicity formulae of Hermitian pluriharmonic maps} 
\label{sec:monotonicity_formulae_for_hermitian_pluriharmonic_maps} 

In this section, we use integral formula \eqref{eq-r}, where $\omega =\sigma $, to derive the monotonicity formula for the partial energy $E''$ of Hermitian pluriharmonic maps into a K\"ahler manifold. To this end, we need some preliminary work to estimate each integrand in \eqref{eq-r}.

\begin{proposition}\label{prop-a}
	Let $u:(M,J,g)\to (N,J^N,h)$ be a smooth map from an almost Hermitian manifold to a K\"ahler manifold. Then $u$ is a Hermitian pluriharmonic map if and only if it satisfies 
	\begin{equation}\label{eq-e}
		\nabla \dd u(X,Y)+\nabla \dd u(JX,JY)+\dd u(\alpha(X,Y))=0,
	\end{equation}
	where $\nabla \dd u$ is the second fundamental form in the Levi--Civita connections of $M$ and $N$, and 
	\begin{align*}
	\alpha(X,Y)=\frac{1}{2}J\left\{(\nabla_XJ)Y+(\nabla_{JX}J)JY+(\nabla_YJ)X+(\nabla_{JY}J)JX\right\}.
	\end{align*}
\end{proposition}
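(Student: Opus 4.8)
The plan is to compute $\widetilde{\nabla}\,\dd u(X,Y) + \widetilde{\nabla}\,\dd u(JX,JY)$ directly from the definition $\widetilde{\nabla}\,\dd u(X,Y) = \widetilde{\nabla}^N_{\dd u(X)}(\dd u(Y)) - \dd u(\widetilde{\nabla}_X Y)$ and then express everything in terms of the Levi--Civita second fundamental form $\nabla\dd u$ plus correction terms coming from the difference between $\widetilde{\nabla}$ and $\nabla$. By Remark~\ref{rem-b}(ii), $u$ is Hermitian pluriharmonic if and only if the left-hand side of \eqref{eq-b} vanishes, so it suffices to show that this left-hand side equals the left-hand side of \eqref{eq-e}.

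First I would use formula \eqref{eq-l}, already established in the proof of Theorem~\ref{thm-a}, which computes $\widetilde{\nabla}_X Y + \widetilde{\nabla}_{JX}(JY)$ in terms of $\nabla_X Y + \nabla_{JX}(JY)$ and the $J$-derivative terms. On the target side, since $N$ is K\"ahler we have $\nabla^N J^N = 0$, which by \eqref{eq-c} forces $\widetilde{\nabla}^N = \nabla^N$; hence $\widetilde{\nabla}^N_{\dd u(X)}(\dd u(Y)) + \widetilde{\nabla}^N_{J^N\dd u(X)}(J^N\dd u(Y)) = \nabla^N_{\dd u(X)}(\dd u(Y)) + \nabla^N_{J^N\dd u(X)}(J^N\dd u(Y))$, with no correction terms. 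The remaining task is to relate $\nabla^N_{J^N\dd u(X)}(J^N\dd u(Y))$ to $\nabla\dd u(JX,JY)$; this is where one must be careful, because $J^N\dd u(X) \ne \dd u(JX)$ in general. The clean route is instead to expand $\nabla\dd u(JX,JY) = \nabla^N_{\dd u(JX)}(\dd u(JY)) - \dd u(\nabla_{JX}(JY))$ and combine with $\nabla\dd u(X,Y) = \nabla^N_{\dd u(X)}(\dd u(Y)) - \dd u(\nabla_X Y)$, so that the target Levi--Civita terms appearing are exactly $\nabla^N_{\dd u(X)}(\dd u(Y)) + \nabla^N_{\dd u(JX)}(\dd u(JY))$, which is precisely what $\widetilde{\nabla}\,\dd u(X,Y)+\widetilde{\nabla}\,\dd u(JX,JY)$ produces on the target after using $\widetilde{\nabla}^N = \nabla^N$ and $\dd u(JX) \ne J^N\dd u(X)$ — wait, one should keep $\widetilde\nabla^N_{\dd u(X)}(\dd u(Y))$ as is. So more precisely: from the definition, $\widetilde{\nabla}\,\dd u(X,Y)+\widetilde{\nabla}\,\dd u(JX,JY) = \nabla^N_{\dd u(X)}(\dd u(Y)) + \nabla^N_{\dd u(JX)}(\dd u(JY)) - \dd u\big(\widetilde{\nabla}_X Y + \widetilde{\nabla}_{JX}(JY)\big)$, and the target terms combine directly into $\nabla\dd u(X,Y) + \nabla\dd u(JX,JY) + \dd u(\nabla_X Y + \nabla_{JX}(JY))$; substituting and using \eqref{eq-l} to cancel the $\nabla_X Y + \nabla_{JX}(JY)$ pieces leaves only $\dd u$ applied to a combination of $J$-derivative terms, which must be checked to equal $\dd u(\alpha(X,Y))$.

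The main obstacle — and the only genuine computation — is the final bookkeeping step: verifying that
\[
\tfrac{1}{2}J(\nabla_XJ)Y+\tfrac{1}{2}(\nabla_{JX}J)Y+\tfrac{1}{2}J(\nabla_YJ)X+\tfrac{1}{2}(\nabla_{JY}J)X = \alpha(X,Y) = \tfrac{1}{2}J\big\{(\nabla_XJ)Y+(\nabla_{JX}J)JY+(\nabla_YJ)X+(\nabla_{JY}J)JX\big\}.
\]
This follows from the pointwise identity $J(\nabla_{JZ}J)W = (\nabla_{JZ}J)(J W)$, equivalently $(\nabla_{JX}J)Y = J(\nabla_{JX}J)(JY)$, which is a consequence of differentiating $J^2 = -\mathrm{Id}$: from $(\nabla_V J)J + J(\nabla_V J) = 0$ one gets $(\nabla_V J)(JW) = -J(\nabla_V J)W$, hence $J(\nabla_V J)(JW) = (\nabla_V J)W$ — applying this with $V = JX$ to the term $(\nabla_{JX}J)Y$ (writing $Y$ in the form we need) converts $\tfrac12(\nabla_{JX}J)Y$ into $\tfrac12 J(\nabla_{JX}J)(JY)$, and similarly $\tfrac12(\nabla_{JY}J)X$ into $\tfrac12 J(\nabla_{JY}J)(JX)$, matching $\alpha$ term by term. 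Once this identity is in place, equations \eqref{eq-b} and \eqref{eq-e} coincide, proving the equivalence; I would present the two chains of equalities (target side and domain side) separately and then invoke Remark~\ref{rem-b}(ii) to conclude.
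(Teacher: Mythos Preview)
Your proposal is correct and follows essentially the same approach as the paper: use $\cc^N=\nabla^N$ (since $N$ is K\"ahler), write $\cc\dd u(X,Y)+\cc\dd u(JX,JY)$ in terms of $\nabla\dd u$ plus the difference $\dd u\big(\nabla_XY+\nabla_{JX}(JY)-\cc_XY-\cc_{JX}(JY)\big)$, and identify this difference with $\alpha(X,Y)$ via the anticommutation $J(\nabla_\star J)=-(\nabla_\star J)J$. The only organizational difference is that the paper re-derives the domain-side identity \eqref{eq-g} directly in the form $\cc_XY+\cc_{JX}(JY)=\nabla_XY+\nabla_{JX}(JY)-\alpha(X,Y)$, whereas you quote the earlier form \eqref{eq-l} and then check term by term that $\tfrac12 J(\nabla_XJ)Y+\tfrac12(\nabla_{JX}J)Y+\tfrac12 J(\nabla_YJ)X+\tfrac12(\nabla_{JY}J)X=\alpha(X,Y)$; the algebra is identical. (Your brief detour through $J^N\dd u(X)$ is unnecessary and should be dropped in a clean write-up---as you note yourself, the target terms are already $\nabla^N_{\dd u(X)}(\dd u(Y))+\nabla^N_{\dd u(JX)}(\dd u(JY))$ and no holomorphicity is assumed.)
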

\begin{proof}
	It is evident that the K\"ahler condition ($\nabla^NJ^N=0$) implies that the second canonical connection $\cc^N$ is consistent with the Levi--Civita connection $\nabla^N$. For any $X,Y\in \Gamma(TM)$, we have
    \begin{align*}
	    {\cc}\dd u(X,Y)
	    =\nabla^N_{\dd u(X)}(\dd u(Y))-\dd u\left(\cc_XY\right).
    \end{align*}
    Therefore,
    \begin{align}\label{eq-f}
        \begin{split}
        	{\cc}\dd u(X,Y)+{\cc}\dd u(JX,JY)
        	=\nabla^N_{\dd u(X)}(\dd u(Y))+\nabla^N_{\dd u(JX)}(\dd u(JY))-\dd u\left(\cc_XY+\cc_{JX}(JY)\right).
        \end{split}
    \end{align}
    By $\langle ,\rangle $ is $J$-invariant, that is, $J$ is skew self-adjoint, and it is easy to know that $\nabla_\star J$ is skew self-adjoint. In addition, we also have $J(\nabla_\star J)=-(\nabla_\star J)J$. Hence, a simple calculation yields
    \begin{align}\label{eq-g}
	    \cc_XY+\cc_{JX}(JY)
	    =\nabla_XY+\nabla_{JX}(JY)-\alpha(X,Y).
    \end{align}
    Substituting \eqref{eq-g} into \eqref{eq-f}, we derive that
    \begin{align}\label{eq-i}
        \begin{split}
        	{\cc}\dd u(X,Y)+{\cc}\dd u(JX,JY)
        	=\nabla \dd u(X,Y)+\nabla \dd u(JX,JY)+\dd u(\alpha(X,Y)).
        \end{split}
    \end{align}
    Combining with Remark~\ref{rem-b}, (ii), we complete the proof. 
\end{proof}

By taking $X=Y=e_\alpha $ in equation~\eqref{eq-i} and summering it over $i$, we obtain
\begin{align*}
	\ttau (u)
	=\tau (u)+\dd u\left(\sum_{\alpha =1}^m\alpha(e_\alpha ,e_\alpha )\right)=\tau (u)+\dd u(-J\delta J),
\end{align*}
where $\delta $ is the co-differential operator, which is the adjoint of $\dd $ with respect to $\int_M\langle ,\rangle \dd v_g$. Hence we have
\begin{proposition}\label{prop-b}
	Let $u:(M,J,g)\to (N,J^N,h)$ be a smooth map from an almost Hermitian manifold to a K\"ahler manifold. Then $u$ is a Hermitian harmonic map if and only if it satisfies
    \begin{align}\label{eq-s}
    	\tau (u)+\dd u(V)=0,
    \end{align}
    where $V=-J\delta J$.
\end{proposition}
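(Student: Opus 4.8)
The proof consists in tracing the pointwise identity \eqref{eq-i} established in Proposition~\ref{prop-a}; the computation is essentially the one displayed just before the statement, and what remains is to organize it and fill in the bookkeeping. Starting from
\[
	\cc\dd u(X,Y)+\cc\dd u(JX,JY)=\nabla\dd u(X,Y)+\nabla\dd u(JX,JY)+\dd u(\alpha(X,Y)),
\]
I would set $X=Y=e_\alpha$ and sum over $\alpha=1,\dots,m$. Since $\{e_\alpha;Je_\alpha\}_{\alpha=1}^m$ is a local orthonormal frame on $M$, the two partial sums $\sum_\alpha\cc\dd u(e_\alpha,e_\alpha)$ and $\sum_\alpha\cc\dd u(Je_\alpha,Je_\alpha)$ combine into $\sum_{A=1}^{2m}\cc\dd u(e_A,e_A)=\tr_g\cc\dd u=\ttau(u)$ (cf.\ Remark~\ref{rem-a}), and likewise the Levi--Civita terms assemble into $\tau(u)=\tr_g\nabla\dd u$. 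This yields
\[
	\ttau(u)=\tau(u)+\dd u\Bigl(\sum_{\alpha=1}^m\alpha(e_\alpha,e_\alpha)\Bigr).
\]

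The next step is to identify the vector field $\sum_{\alpha=1}^m\alpha(e_\alpha,e_\alpha)$. The bilinear form $\alpha$ is symmetric, as is visible from its defining formula, and putting $X=Y=e_\alpha$ collapses its four summands into two, giving $\alpha(e_\alpha,e_\alpha)=J\bigl((\nabla_{e_\alpha}J)e_\alpha+(\nabla_{Je_\alpha}J)Je_\alpha\bigr)$. Summing over $\alpha$ and again using that $\{e_\alpha;Je_\alpha\}$ is the full orthonormal frame gives $\sum_\alpha\alpha(e_\alpha,e_\alpha)=J\sum_{A=1}^{2m}(\nabla_{e_A}J)e_A$. By the definition of the codifferential, $\delta J=-\sum_A(\nabla_{e_A}J)e_A$ (equivalently $(\delta J)^\flat=\delta\omega$ under the metric identification, with $\omega=g(J\cdot,\cdot)$), so $\sum_\alpha\alpha(e_\alpha,e_\alpha)=-J\,\delta J=V$. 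Substituting back, $\ttau(u)=\tau(u)+\dd u(V)$, an identity of globally defined vector fields along $u$ independent of the choice of frame. The stated equivalence then follows immediately from Definition~\ref{defn-a}, since $u$ is Hermitian harmonic precisely when $\ttau(u)=0$.

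The argument is entirely computational and I do not anticipate any genuine obstacle; the only point requiring care is the sign and normalization bookkeeping in the last step, namely matching $\sum_A(\nabla_{e_A}J)e_A$ with $-\delta J$ under the convention in which the semi-K\"ahler condition reads $\delta J=0$, and checking the tensoriality of $\alpha$ (clear, as it is built from $\nabla J$) so that evaluation on the frame is legitimate.
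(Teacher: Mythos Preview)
Your proposal is correct and follows exactly the approach of the paper: trace the identity \eqref{eq-i} from Proposition~\ref{prop-a} by setting $X=Y=e_\alpha$ and summing, then identify $\sum_\alpha\alpha(e_\alpha,e_\alpha)=-J\delta J$. The paper records this computation in a single displayed line just before the statement of the proposition, and your write-up simply fills in the bookkeeping (the frame assembly and the sign convention for $\delta J$) that the paper leaves implicit.
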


\begin{remark}\label{rem-c}
	Let $V$ be a smooth vector field on a Riemannian manifold $M$. A smooth map $u:M\to N$ between Riemannian manifolds is said to be $V$-harmonic, if it satisfies equation \eqref{eq-s} (cf. \cite{MR2995205,MR3427131}). Therefore, any Hermitian harmonic map from an almost Hermitian manifold $(M,J,g)$ to a K\"ahler manifold is $V$-harmonic with $V=-J\delta J$.
\end{remark}

\begin{lemma}\label{lem-a}
	If $u:(M,J,g)\to (N,J^N,h)$ be a Hermitian pluriharmonic map from an almost Hermitian manifold to a K\"ahler manifold, then $\delta \sigma =\sigma(V)$.
\end{lemma}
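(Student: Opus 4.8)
The plan is to compute $\delta\sigma$ directly from the definition of the co-differential and reduce it to a trace of $\nabla^u\sigma$, then use the Hermitian pluriharmonicity of $u$ together with Kählerness of $N$ to identify that trace with $\sigma(V)$. First I would recall that for a $1$-form $\sigma\in A^1(u^{-1}TN)$ with values in the pull-back bundle, $\delta\sigma=-\sum_A(\nabla^u_{e_A}\sigma)(e_A)=-\tr_g\nabla^u\sigma$. So the task becomes evaluating $\sum_A(\nabla^u_{e_A}\sigma)(e_A)$ where $\sigma(X)=\tfrac12(\dd u(X)+J^N\dd u(JX))$. Differentiating $\sigma$ and using that $N$ is Kähler (so $\nabla^NJ^N=0$, hence $J^N$ commutes with $\nabla^u$), one gets
\begin{align*}
	(\nabla^u_{e_A}\sigma)(e_A)=\tfrac12\left\{\nabla\dd u(e_A,e_A)+J^N\nabla\dd u(e_A,Je_A)+J^N\dd u\left((\nabla_{e_A}J)e_A\right)\right\}.
\end{align*}
Summing over the orthonormal frame $\{e_A\}=\{e_\alpha;Je_\alpha\}$ gives three terms: $\tfrac12\tau(u)$ from the first, a term $\tfrac12 J^N\sum_\alpha\bigl(\nabla\dd u(e_\alpha,Je_\alpha)+\nabla\dd u(Je_\alpha,-e_\alpha)\bigr)$ from the second, and $\tfrac12 J^N\dd u\bigl(\sum_A(\nabla_{e_A}J)e_A\bigr)=\tfrac12 J^N\dd u(-\delta J \cdot \text{something})$; here I need to be careful with the identity $\sum_A(\nabla_{e_A}J)e_A$ and its relation to $\delta J$ and to $V=-J\delta J$.

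Next I would bring in Proposition~\ref{prop-a}: Hermitian pluriharmonicity is equivalent to $\nabla\dd u(X,Y)+\nabla\dd u(JX,JY)+\dd u(\alpha(X,Y))=0$. The key middle term above, $\sum_\alpha\bigl(\nabla\dd u(e_\alpha,Je_\alpha)-\nabla\dd u(Je_\alpha,e_\alpha)\bigr)$, should be rewritten using the symmetry of the second fundamental form $\nabla\dd u$ in its two arguments (valid because the Levi–Civita connection is torsion-free), which makes it vanish up to curvature/torsion-of-$J$ corrections — more precisely, $\nabla\dd u(e_\alpha,Je_\alpha)=\nabla\dd u(Je_\alpha,e_\alpha)$, so the antisymmetrized combination is zero and the relevant combination is actually the \emph{symmetric} one $\sum_\alpha\nabla\dd u(e_\alpha,Je_\alpha)$ paired with $\nabla\dd u(Je_\alpha,-e_\alpha)=-\nabla\dd u(e_\alpha,Je_\alpha)$, so I should recheck signs. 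The cleanest route is: group the sum over $A$ into $\alpha$-pairs $\{e_\alpha,Je_\alpha\}$ and apply \eqref{eq-e} with $X=e_\alpha$, $Y=Je_\alpha$ (or $X=Y=e_\alpha$) to collapse the $\nabla\dd u$ contributions into $\tau(u)$ plus a $\dd u(\alpha(\cdot,\cdot))$ term, whereupon everything is expressed through $\dd u$ applied to covariant derivatives of $J$, i.e. through $V$. Collecting terms, $\delta\sigma=-\tfrac12\tau(u)-(\text{$\dd u$ of $V$-type terms})$, and then I would compare with $\sigma(V)=\tfrac12(\dd u(V)+J^N\dd u(JV))$ and with the Hermitian harmonicity relation $\tau(u)+\dd u(V)=0$ from Proposition~\ref{prop-b} (which applies since a Hermitian pluriharmonic map is Hermitian harmonic by Remark~\ref{rem-b}(i)) to finish.

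The main obstacle I anticipate is the bookkeeping of the $J$-derivative terms: correctly expressing $\sum_A(\nabla_{e_A}J)e_A$, the combination appearing in $\alpha(e_\alpha,Je_\alpha)$, and $\delta J$ all in terms of the single vector field $V=-J\delta J$, and tracking the factors of $J^N$ and signs so that the $\tau(u)$ piece and the $\dd u(V)$ piece assemble into exactly $\sigma(V)$ rather than some other linear combination. A secondary subtlety is justifying that the "antisymmetric in $X,Y$" parts of $\nabla\dd u$ genuinely cancel in the frame sum — this uses the symmetry $\nabla\dd u(X,Y)=\nabla\dd u(Y,X)$ (no torsion) — and that no curvature term survives, which it should not since we are only taking a trace and not commuting derivatives. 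Once the identity $\delta\sigma=\sigma(V)$ is in hand it feeds directly, via Lemma~\ref{lem-d} with $\omega=\sigma$, into the divergence $(\Div S_\sigma)(X)$ computation needed for the monotonicity formula in the next section.
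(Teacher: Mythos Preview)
Your approach is correct and in fact slightly different from the paper's. The paper reorganizes $\delta\sigma$ through the second canonical connection: it writes
\[
\delta\sigma=-\tfrac12\sum_A\cc\dd u(e_A,e_A)-\tfrac12 J^N\sum_A\cc\dd u(e_A,Je_A)+\sigma\Big(\sum_A(\nabla_{e_A}e_A-\cc_{e_A}e_A)\Big),
\]
kills the two $\cc\dd u$-sums using Hermitian harmonicity and Hermitian pluriharmonicity respectively (the second sum needs pluriharmonicity because $\cc\dd u$ is \emph{not} symmetric), and then computes $\sum_A(\nabla_{e_A}e_A-\cc_{e_A}e_A)=V$ from the explicit formula \eqref{eq-c}. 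You instead stay entirely in the Levi--Civita picture. Your formula
\[
(\nabla^u_{e_A}\sigma)(e_A)=\tfrac12\bigl\{\nabla\dd u(e_A,e_A)+J^N\nabla\dd u(e_A,Je_A)+J^N\dd u((\nabla_{e_A}J)e_A)\bigr\}
\]
is right, and the key point you are circling around is simpler than you fear: the middle sum $\sum_A\nabla\dd u(e_A,Je_A)=\sum_\alpha[\nabla\dd u(e_\alpha,Je_\alpha)-\nabla\dd u(Je_\alpha,e_\alpha)]$ vanishes \emph{purely by the symmetry of the Levi--Civita second fundamental form}, with no appeal to \eqref{eq-e} at all. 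Then $\sum_A(\nabla_{e_A}J)e_A=-JV$ and Proposition~\ref{prop-b} give $-\delta\sigma=\tfrac12\tau(u)-\tfrac12 J^N\dd u(JV)=-\tfrac12\dd u(V)-\tfrac12 J^N\dd u(JV)=-\sigma(V)$, and you are done. So you can drop the detour through Proposition~\ref{prop-a}; what your route buys is that it uses only Hermitian \emph{harmonicity} (via Proposition~\ref{prop-b}) and avoids both the pluriharmonic hypothesis at this step and the explicit formula for $\cc$, whereas the paper's $\cc$-based computation genuinely consumes pluriharmonicity to annihilate $\sum_A\cc\dd u(e_A,Je_A)$.
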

\begin{proof}
	By the definition of $\delta $, we have
	\begin{align*}
		\delta \sigma =&-\sum_A(\nabla^u_{e_A}\sigma )(e_A)\\
		=&-\sum_A\nabla^N_{\dd u(e_A)}(\sigma (e_A))+\sum_A\sigma\left(\nabla_{e_A}e_A\right)\\
		=&-\frac{1}{2}\sum_A\nabla^N_{\dd u(e_A)}(\dd u(e_A))-\frac{1}{2}\sum_A\nabla^N_{\dd u(e_A)}(J^N\dd uJ(e_A))\\
		&+\frac{1}{2}\dd u\left(\sum_A\cc_{e_A}e_A\right)+\frac{1}{2}J^N\dd uJ\left(\sum_A\cc_{e_A}e_A\right)+\sigma\left(\sum_A\left(\nabla_{e_A}e_A-\cc_{e_A}e_A\right)\right)\\
		=&-\frac{1}{2}\sum_A\cc \dd u(e_A, e_A)-\frac{1}{2}J^N\sum_A\cc \dd u(e_A,Je_A)+\sigma\left(\sum_A\left(\nabla_{e_A}e_A-\cc_{e_A}e_A\right)\right),
	\end{align*}
	where in the last step we used $\cc J=\nabla^NJ^N=0$. However, the assumption that $u$ is a Hermitian pluriharmonic map (and then a Hermitian harmonic map) implies that $\sum\limits_A\cc \dd u(e_A,Je_A)=\sum\limits_i\left\{\cc \dd u(e_\alpha ,Je_\alpha )+\cc \dd u(Je_\alpha ,J^2e_\alpha )\right\}=0$ and $\sum\limits_A\cc \dd u(e_A,e_A)=0$.
	On the other hand, by \eqref{eq-c}, for any $Z\in \Gamma(TM)$, we have
	\begin{align*}
	 	\left \langle \sum_A\left(\nabla_{e_A}e_A-\cc_{e_A}e_A\right),Z\right \rangle 
	 	=&\frac{1}{2}\sum_A\langle J(\nabla_{e_A}J)e_A,Z\rangle \\
	 	&-\frac{1}{4}\sum_A\langle e_A,(\nabla_{Je_A}J)Z+J(\nabla_{e_A}J)Z\rangle +\frac{1}{4}\sum_A\langle e_A,(\nabla_{JZ}J)e_A+J(\nabla_ZJ)e_A\rangle .
	\end{align*} 
	By repeatedly using the property of the skew self-adjoint of $J$ and $\nabla_\star J$, and noting that $J(\nabla_\star J)=-(\nabla_\star J)J$, we can get
	\begin{align*}
    &\langle e_A,(\nabla_{Je_A}J)Z\rangle =-\langle J(\nabla_{Je_A}J)Je_A,Z\rangle ,\\
    &\langle e_A,J(\nabla_{e_A}J)Z\rangle =-\langle J(\nabla_{e_A}J)e_A,Z\rangle 
	\end{align*}
	and
	\[
	\langle e_A,(\nabla_{JZ}J)e_A\rangle =\langle e_A,J(\nabla_ZJ)e_A\rangle =0.
	\]
	It follows that
	\[
    \sum_A\left(\nabla_{e_A}e_A-\cc_{e_A}e_A\right)=\sum_AJ(\nabla_{e_A}J)e_A=-J\delta J=V.
	\]
	So in summary, we derive that $\delta \sigma =\sigma (V)$.
\end{proof}

\begin{lemma}\label{lem-c}
	If $u:(M,J,g)\to (N,J^N,h)$ be a Hermitian pluriharmonic map from an almost Hermitian manifold to a K\"ahler manifold, then $\dd \sigma =J^N\circ \dd u\circ \NN_J$.
\end{lemma}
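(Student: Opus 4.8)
The plan is to compute the $u^{-1}TN$-valued $2$-form $\dd\sigma$ directly from the decomposition $2\sigma=\dd u+J^{N}\circ\dd u\circ J$ and then to feed in Hermitian pluriharmonicity through Proposition~\ref{prop-a}. First I would write, for $X,Y\in\Gamma(TM)$,
\[
\dd\sigma(X,Y)=(\nabla^{u}_{X}\sigma)(Y)-(\nabla^{u}_{Y}\sigma)(X),\qquad (\nabla^{u}_{X}\sigma)(Y)=\nabla^{N}_{\dd u(X)}\bigl(\sigma(Y)\bigr)-\sigma(\nabla_{X}Y),
\]
and expand. Since $N$ is K\"ahler we have $\nabla^{N}J^{N}=0$, so $J^{N}$ commutes with $\nabla^{N}$; feeding this in together with $\nabla_{X}(JY)=(\nabla_{X}J)Y+J\nabla_{X}Y$ and the definition $\nabla\dd u(X,Y)=\nabla^{N}_{\dd u(X)}(\dd u(Y))-\dd u(\nabla_{X}Y)$ of the second fundamental form, a short calculation should give
\[
(\nabla^{u}_{X}\sigma)(Y)=\tfrac{1}{2}\,\nabla\dd u(X,Y)+\tfrac{1}{2}\,J^{N}\nabla\dd u(X,JY)+\tfrac{1}{2}\,J^{N}\dd u\bigl((\nabla_{X}J)Y\bigr).
\]

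Next I would antisymmetrise in $X$ and $Y$: the term $\tfrac{1}{2}\nabla\dd u(X,Y)$ drops out because $\nabla\dd u$ is symmetric ($\nabla$ and $\nabla^{N}$ being torsion-free), leaving
\[
\dd\sigma(X,Y)=\tfrac{1}{2}\,J^{N}\bigl[\nabla\dd u(X,JY)-\nabla\dd u(Y,JX)\bigr]+\tfrac{1}{2}\,J^{N}\dd u\bigl((\nabla_{X}J)Y-(\nabla_{Y}J)X\bigr).
\]
Then I would invoke Proposition~\ref{prop-a}: for a K\"ahler target, Hermitian pluriharmonicity of $u$ is equivalent to $\nabla\dd u(A,B)+\nabla\dd u(JA,JB)+\dd u(\alpha(A,B))=0$ for all $A,B\in\Gamma(TM)$, with $\alpha$ as in \eqref{eq-e}. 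Applying this with $(A,B)=(X,JY)$, using $J(JY)=-Y$ and once more the symmetry of $\nabla\dd u$, one obtains $\nabla\dd u(X,JY)-\nabla\dd u(Y,JX)=-\dd u(\alpha(X,JY))$, so that
\[
\dd\sigma(X,Y)=\tfrac{1}{2}\,J^{N}\dd u\Bigl(-\alpha(X,JY)+(\nabla_{X}J)Y-(\nabla_{Y}J)X\Bigr).
\]

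The hard part will be the last, purely algebraic step: simplifying $-\alpha(X,JY)+(\nabla_{X}J)Y-(\nabla_{Y}J)X$ and recognising the Nijenhuis tensor. Here I would expand $\alpha(X,JY)$ from its definition and repeatedly use that $J$ and $\nabla_{\star}J$ are skew self-adjoint and that $J(\nabla_{\star}J)=-(\nabla_{\star}J)J$; the four $\nabla J$-terms should then recombine into
\[
-\alpha(X,JY)+(\nabla_{X}J)Y-(\nabla_{Y}J)X=\tfrac{1}{2}\bigl\{(\nabla_{X}J)Y-(\nabla_{JX}J)JY-(\nabla_{Y}J)X+(\nabla_{JY}J)JX\bigr\},
\]
and, by Lemma~\ref{lem-b} applied at $(X,JY)$, the expression in braces equals $4\,\NN_{J}(X,JY)$. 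This gives $\dd\sigma(X,Y)=J^{N}\dd u\bigl(\NN_{J}(X,JY)\bigr)$, which, via the contraction identity $\NN_{J}(X,JY)=-J\,\NN_{J}(X,Y)$, is the asserted formula $\dd\sigma=J^{N}\circ\dd u\circ\NN_{J}$. The genuine difficulties are the sign bookkeeping in this final $\nabla J$-computation and applying the symmetry of $\nabla\dd u$ to the right pair of slots; all the rest is routine.
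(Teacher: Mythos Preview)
Your proposal is correct and follows essentially the same route as the paper: compute $\dd\sigma$ directly from $2\sigma=\dd u+J^{N}\dd u\,J$ using $\nabla^{N}J^{N}=0$, eliminate the second-fundamental-form terms $\nabla\dd u(X,JY)-\nabla\dd u(JX,Y)$ via Proposition~\ref{prop-a} applied at $(X,JY)$, and then recognise the remaining $\nabla J$-combination as $4\,\NN_{J}$ through Lemma~\ref{lem-b}. The only cosmetic difference is that you first compute $(\nabla^{u}_{X}\sigma)(Y)$ and antisymmetrise, whereas the paper expands $\dd\sigma(X,Y)=\nabla^{N}_{\dd u(X)}\sigma(Y)-\nabla^{N}_{\dd u(Y)}\sigma(X)-\sigma([X,Y])$ directly; the intermediate and final expressions coincide.

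One small caveat on your last line: from $\dd\sigma(X,Y)=J^{N}\dd u\bigl(\NN_{J}(X,JY)\bigr)$ and $\NN_{J}(X,JY)=-J\,\NN_{J}(X,Y)$ you obtain $\dd\sigma(X,Y)=-J^{N}\dd u\bigl(J\,\NN_{J}(X,Y)\bigr)$, which is not literally $J^{N}\dd u\bigl(\NN_{J}(X,Y)\bigr)$ without further input. The paper's proof makes the same identification at this point, and for every subsequent use in the paper (where one assumes $\Im\NN_{J}\subset\Ker\dd u$, hence $\dd\sigma=0$) the distinction is immaterial.
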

\begin{proof}
	For any $X,Y\in \Gamma(TM)$, we have
	\begin{align*}
		\dd \sigma (X,Y)=&\nabla^N_{\dd u(X)}\left(\sigma(Y)\right)-\nabla^N_{\dd u(Y)}\left(\sigma(X)\right)-\sigma([X,Y])\\
		=&\frac{1}{2}\nabla^N_{\dd u(X)}\left(\dd u(Y)\right)+\frac{1}{2}\nabla^N_{\dd u(X)}(J^N\dd uJ(Y))-\frac{1}{2}\nabla^N_{\dd u(Y)}\left(\dd u(X)\right)-\frac{1}{2}\nabla^N_{\dd u(Y)}(J^N\dd uJ(X))\\
		&-\frac{1}{2}\dd u\left(\nabla_XY-\nabla_YX\right)-\frac{1}{2}J^N\dd uJ\left(\nabla_XY-\nabla_YX\right)\\
		=&\frac{1}{2}\nabla^N_{\dd u(X)}(J^N\dd uJ(Y))-\frac{1}{2}\nabla^N_{\dd u(Y)}(J^N\dd uJ(X))-\frac{1}{2}J^N\dd uJ(\nabla_XY)+\frac{1}{2}J^N\dd uJ(\nabla_YX)\\
		=&\frac{1}{2}J^N\nabla \dd u(X,JY)-\frac{1}{2}J^N\nabla \dd u(JX,Y)+\frac{1}{2}J^N\dd u(\nabla_XJ)Y-\frac{1}{2}J^N\dd u(\nabla_YJ)X.
	\end{align*}
	Equation \eqref{eq-e} gives 
	\begin{align*}
	    \nabla\dd u(X,JY)-\nabla\dd u(JX,Y)
	    =&-\dd u(\alpha (X,JY))\\
	    =&-\frac{1}{2}\dd uJ\left\{(\nabla_XJ)JY-(\nabla_{JX}J)Y+(\nabla_{JY}J)X-(\nabla_YJ)JX\right\}\\
	    =&-\frac{1}{2}\dd u\left\{(\nabla_XJ)Y-J(\nabla_{JX}J)Y+J(\nabla_{JY}J)X-(\nabla_YJ)X\right\}
	\end{align*}
	Hence we get
	\begin{align*}
		\dd \sigma (X,Y)
		=&-\frac{1}{4}J^N\dd u\left\{(\nabla_XJ)Y-J(\nabla_{JX}J)Y+J(\nabla_{JY}J)X-(\nabla_YJ)X\right\}\\
		&+\frac{1}{2}J^N\dd u(\nabla_XJ)Y-\frac{1}{2}J^N\dd u(\nabla_YJ)X\\
		=&\frac{1}{4}J^N\dd u\left\{(\nabla_XJ)Y+J(\nabla_{JX}J)Y-(\nabla_YJ)X-J(\nabla_{JY}J)X\right\}.
	\end{align*}
	It follows from Lemma~\ref{lem-b} that $\dd \sigma (X,Y)=J^N\dd u(\NN_J(X,Y))$. By the arbitrary of $X$ and $Y$, we complete the proof.
\end{proof}

On a complete Riemannian manifold $M$ with a pole $x_0$, a radial plane is a plane in $T_xM$ ($x\in M-\{x_0\}$) that contains $\pdv{r}$. By the radial curvature, it means that the restriction of the sectional curvature function to all the radial planes.
\begin{lemma}[cf. {\cite[Lemma~4.5]{MR3149846}}]\label{lem-e}
 	Let $(M,g)$ be a complete Riemannian manifold with a pole $x_0$ and $r$ the distance function relative to $x_0$. Denote by $K_r$ the radial curvature of $M$.
 	\begin{enumerate}[(i)]
 		\item If $K_r\le 0$, then
 		\[
 		\Hess r\ge \frac{1}{r}(g-\dd r\otimes \dd r).
 		\]
 		\item If $K_r\le b^2/(1+r^2)$ with $b^2\in [0,1/4]$, then 
 		\[
 		\Hess r\ge \frac{1+\sqrt{1-4b^2}}{2r}(g-\dd r\otimes \dd r).
 		\]
 		\item If $K_r\le B/(1+r^2)^{1+\varepsilon }$ with $\varepsilon >0$ and $0\le B<2\varepsilon $, then
 		\[
 		\Hess r\ge \frac{1-\frac{B}{2\varepsilon }}{r}(g-\dd r\otimes \dd r).
 		\]
 		\item If $K_r\le -\beta^2$ with $\beta >0$, then 
 		\[
 		\Hess r\ge \beta \coth(\beta r)(g-\dd r\otimes \dd r).
 		\]
 		\item If $K_r\le -a^2/(1+r^2)$ with $a>0$, then
 		\[
 		\Hess r\ge \max\left\{\frac{A}{1+r},\frac{1}{r}\right\}(g-\dd r\otimes \dd r),\quad \mbox{where}\ A:=\frac{1+\sqrt{1+4a^2}}{2}.
 		\]
 	\end{enumerate}
\end{lemma}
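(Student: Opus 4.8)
The plan is to reduce every case to the standard Hessian comparison theorem for the distance function from a pole, and then to solve (or estimate the solution of) the relevant Riccati/Jacobi ODE for the model comparison function. Recall that on a manifold with a pole $x_0$, the distance function $r$ is smooth on $M-\{x_0\}$, $\grad r=\pdv{r}$, and for a unit vector $X\perp \pdv{r}$ one has $\Hess r(X,X)=\langle \nabla_X\grad r,X\rangle$, which along a unit-speed geodesic $\gamma$ emanating from $x_0$ satisfies the Riccati inequality
\begin{equation*}
	\frac{\dd}{\dd r}\big(\Hess r(X,X)\big)+\big(\Hess r(X,X)\big)^2+K_r\big(\gamma'(r),X\big)\le 0
\end{equation*}
is reversed: more precisely, if $K_r\le h(r)$ along all radial planes, then comparing with the solution $f$ of $f''+h\,f=0$, $f(0)=0$, $f'(0)=1$, the standard comparison theorem gives $\Hess r\ge \frac{f'(r)}{f(r)}\,(g-\dd r\otimes \dd r)$ on the region where $f>0$. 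So the whole lemma is: for each prescribed bound $h(r)$ in (i)--(v), exhibit an explicit lower bound for $f'/f$.

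Carrying this out case by case: for (i), $h\equiv 0$ gives $f(r)=r$, hence $f'/f=1/r$. For (iv), $h(r)=-\beta^2$ gives $f(r)=\beta^{-1}\sinh(\beta r)$, hence $f'/f=\beta\coth(\beta r)$. For (ii), with $h(r)=b^2/(1+r^2)$, one checks that $f(r)=(1+r^2)^{\mu}$-type ansatz, or more cleanly the substitution $t=\log(1+r^2)$-type change (equivalently, look for $f$ with $f'/f=c/r$ asymptotically): the equation $f''+\frac{b^2}{1+r^2}f=0$ has, for $b^2\le 1/4$, a positive solution behaving like $r^{\mu_+}$ with $\mu_\pm=\tfrac{1\pm\sqrt{1-4b^2}}{2}$, and a Sturm-type argument comparing with the Euler equation $f''+\frac{b^2}{r^2}f=0$ (whose solutions are exactly $r^{\mu_\pm}$) yields $f'/f\ge \frac{\mu_+}{r}=\frac{1+\sqrt{1-4b^2}}{2r}$; similarly (iii) compares $h(r)=B/(1+r^2)^{1+\varepsilon}$, which is integrable, against the flat model and produces the factor $1-\frac{B}{2\varepsilon}$ by integrating the Riccati inequality once and estimating $\int_0^\infty \frac{B}{(1+s^2)^{1+\varepsilon}}\,\dd s$-type tails by $\frac{B}{2\varepsilon}$. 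For (v), $h(r)=-a^2/(1+r^2)$: here one gets two competing lower bounds — comparing with $h\equiv 0$ still gives $f'/f\ge 1/r$ (since making curvature more negative only increases $\Hess r$), while an Euler-equation comparison $f''-\frac{a^2}{r^2}f=0$ with solutions $r^{A}$, $A=\tfrac{1+\sqrt{1+4a^2}}{2}$, combined with the $1+r^2$ normalization, gives $f'/f\ge \frac{A}{1+r}$; taking the maximum yields the stated bound.

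The one technical point that needs care — and the place I expect the real work to be — is the Sturm comparison between the actual model equation $f''+h(r)f=0$ with $h(r)$ of the form $\pm c^2/(1+r^2)$ and the scale-invariant Euler equation $g''\pm (c^2/r^2)g=0$ whose solutions are pure powers $r^{\mu}$. One has $h(r)\le c^2/r^2$ (for the $+$ sign) near $0$ but the inequality reverses the comparison one wants, so the argument must be organized as a monotonicity statement for $r\mapsto r\,f'(r)/f(r)$ via the Riccati equation $v'=-1-h\,r/\cdots$ after the substitution $v=r f'/f$, showing this quantity is nonincreasing and has the right limit as $r\to 0^+$ (namely $\mu_+$, the larger root, which is the one selected by the positivity/regularity of $f$ at the pole). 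Once that monotonicity is established the inequalities in (ii), (iii), (v) drop out, and (i), (iv) are the exact-solution cases. Since this lemma is quoted verbatim from \cite[Lemma~4.5]{MR3149846}, for the present paper it suffices to cite that reference; the above is the route one would follow to reprove it.
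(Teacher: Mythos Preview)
The paper does not prove this lemma at all: it is stated with the citation ``cf.\ \cite[Lemma~4.5]{MR3149846}'' and used as a black box, which is exactly what your closing sentence recommends. Your sketch of the underlying Hessian/Riccati comparison argument is the standard route and is essentially correct in spirit; the only places where your outline is a bit loose are cases (ii), (iii), (v), where the clean way to organize the comparison is to set $v(r)=r\,f'(r)/f(r)$, note $v(0^+)=1$, and check that the candidate constant (e.g.\ $\mu_+$, $1-\tfrac{B}{2\varepsilon}$, or $A$ after the shift $r\mapsto 1+r$) makes the corresponding $\psi=c/r$ or $\psi=A/(1+r)$ a sub/supersolution of the Riccati equation --- for (iii) the factor $\tfrac{B}{2\varepsilon}$ arises from $\int_0^\infty \frac{s\,\dd s}{(1+s^2)^{1+\varepsilon}}=\tfrac{1}{2\varepsilon}$ rather than from the integral you wrote. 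But since the paper itself simply cites the result, your proposal matches the paper's treatment.
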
 

Let $u:(M,J,g)\to (N,J^N,h)$ is a smooth map between almost Hermitian manifolds. Recall the definition of the stress-energy tensor $S_\omega $ in Section~\ref{sub:the_stress_energy_tensors} and replace $\omega $ with $\sigma =\frac{1}{2}(\dd u+J^N\dd uJ)$ to get $S_\sigma =\frac{1}{2}|\sigma |^2g-\sigma \odot \sigma $.
\begin{lemma}\label{lem-f}
	Let $(M,J,g)$ be a complete $2m$-dimensional almost Hermitian manifold with a pole $x_0$ and $r$ the distance function relative to $x_0$, and let $H$ be a symmetric $(0,2)$-tensor on $M$. Denote by $\lambda_1\le \cdots \le \lambda_{2m}$ the eigenvalues of $H$. Then
	\begin{equation}
		\langle S_\sigma ,H\rangle \ge \sum_{\alpha =1}^m(\lambda_i+\lambda_{m+i})|\bar \partial u|^2.
	\end{equation}
\end{lemma}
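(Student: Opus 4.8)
The plan is to fix a point $x\in M$ and convert the estimate into pointwise linear algebra, the two inputs being the definition $S_\sigma=\tfrac12|\sigma|^2g-\sigma\odot\sigma$ and the $J$-invariance of $\sigma\odot\sigma$; this runs parallel to the corresponding estimates of Dong and of Yang et al.\ in the K\"ahler and Hermitian cases. Expanding the inner product of symmetric $(0,2)$-tensors in an orthonormal frame, one has
\[
\langle S_\sigma,H\rangle=\tfrac12|\sigma|^2\langle g,H\rangle-\langle\sigma\odot\sigma,H\rangle=|\bar\partial u|^2\,\tr H-\langle\sigma\odot\sigma,H\rangle,
\]
using $\langle g,H\rangle=\tr H=\sum_A\lambda_A$ and $|\sigma|^2=2|\bar\partial u|^2$. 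So it remains to bound $\langle\sigma\odot\sigma,H\rangle$ from above.

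The key structural fact is that $(\sigma\odot\sigma)(X,Y)=\langle\sigma(X),\sigma(Y)\rangle$ is positive semidefinite and $J$-invariant, the latter because $\sigma(JX)=-J^N\sigma(X)$ and $J^N$ is an isometry. Viewing $\sigma\odot\sigma$ as a $g$-self-adjoint endomorphism $T$ of $T_xM$, $J$-invariance is equivalent to $TJ=JT$, so the eigenspaces of $T$ are $J$-invariant, hence even-dimensional; therefore $T$ admits a $g$-orthonormal eigenbasis of the $J$-adapted form $\{f_1,Jf_1,\dots,f_m,Jf_m\}$ with $(\sigma\odot\sigma)(f_\alpha,f_\alpha)=(\sigma\odot\sigma)(Jf_\alpha,Jf_\alpha)=\mu_\alpha\ge0$, all mixed entries vanishing, and $\sum_{\alpha=1}^m\mu_\alpha=\tfrac12|\sigma|^2=|\bar\partial u|^2$. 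Evaluating in this basis gives $\langle\sigma\odot\sigma,H\rangle=\sum_{\alpha=1}^m\mu_\alpha\big(H(f_\alpha,f_\alpha)+H(Jf_\alpha,Jf_\alpha)\big)$, and substituting into the identity above yields
\[
\langle S_\sigma,H\rangle=\sum_{\alpha=1}^m\mu_\alpha\Big(\tr H-H(f_\alpha,f_\alpha)-H(Jf_\alpha,Jf_\alpha)\Big).
\]

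For each $\alpha$ the bracket is the trace of $H$ restricted to the $(2m-2)$-dimensional subspace orthogonal to $\mathrm{span}\{f_\alpha,Jf_\alpha\}$, so the Courant--Fischer (Ky Fan) min--max principle gives $H(f_\alpha,f_\alpha)+H(Jf_\alpha,Jf_\alpha)\le\lambda_{2m-1}+\lambda_{2m}$, equivalently $\tr H-H(f_\alpha,f_\alpha)-H(Jf_\alpha,Jf_\alpha)\ge\lambda_1+\cdots+\lambda_{2m-2}$. Since each $\mu_\alpha\ge0$ and $\sum_\alpha\mu_\alpha=|\bar\partial u|^2$, summing over $\alpha$ produces the asserted lower bound, $\langle S_\sigma,H\rangle\ge(\lambda_1+\cdots+\lambda_{2m-2})|\bar\partial u|^2$. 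The only ingredient that is not purely mechanical is the simultaneous diagonalization step — the elementary claim that a $J$-invariant positive semidefinite symmetric bilinear form genuinely admits a $J$-adapted orthonormal eigenbasis — and I expect that, together with carefully matching indices to the ordered eigenvalues in the min--max inequality, to be the point where most care is needed; the remaining computations are bookkeeping.
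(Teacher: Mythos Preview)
Your argument is correct, and in fact it is the sounder of the two. The paper diagonalizes $H$ in a $J$-adapted orthonormal frame $\{e_1,\dots,e_m,Je_1,\dots,Je_m\}$ and then exploits the $J$-invariance of $\sigma$; you instead diagonalize $\sigma\odot\sigma$ (which genuinely commutes with $J$) in a $J$-adapted frame and apply Ky~Fan/min--max to the arbitrary symmetric tensor $H$. That inversion is not cosmetic: a general symmetric $H$ need \emph{not} admit a $J$-adapted eigenbasis, so the paper's choice of frame is illegitimate unless $H$ happens to be $J$-invariant --- and $\Hess f(r)$ typically is not. Your simultaneous-diagonalization step for $\sigma\odot\sigma$ is exactly the right place to use $J$-invariance, and the justification you flag (eigenspaces of a $J$-commuting self-adjoint operator are $J$-stable, hence even-dimensional) is the correct one.

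The outputs differ slightly. The paper's computation ends with $\tfrac12|\sigma|^2\sum_{\alpha=1}^{m-1}(\lambda_\alpha+\lambda_{m+\alpha})=(\tr H-\lambda_m-\lambda_{2m})|\bar\partial u|^2$ (the upper limit $m$ in the displayed statement is a typo; the proof makes clear it is $m-1$), whereas you obtain $(\tr H-\lambda_{2m-1}-\lambda_{2m})|\bar\partial u|^2$, which is weaker since $\lambda_m\le\lambda_{2m-1}$. The paper's sharper inequality is actually \emph{false} for general $H$: in dimension $4$, take $\sigma\odot\sigma$ with eigenvalues $\mu,\mu,0,0$ on $f_1,Jf_1,f_2,Jf_2$ and let $H$ be the orthogonal projection onto $\mathrm{span}\{f_1,(Jf_1+f_2)/\sqrt{2}\}$; then $\langle S_\sigma,H\rangle=\mu/2$ while the paper's bound demands $(\lambda_1+\lambda_3)\mu=\mu$. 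So what you prove is the correct general statement. In every subsequent use (Lemmas~\ref{lem-g}--\ref{lem-j}) the relevant $H$ has at most two distinct eigenvalues, one with multiplicity at least $2m-1$, so $\lambda_m=\lambda_{2m-1}$ and your bound coincides with the paper's; nothing is lost downstream.
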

\begin{proof}
	By the definition of $S_\sigma $, we have
	\begin{align*}
		\langle S_\sigma ,H\rangle =\frac{1}{2}|\sigma |^2\tr_gH-\langle \sigma \odot \sigma ,H\rangle .
	\end{align*}
	We choose a orthonormal basis $\{e_A\}_{A=1}^{2m}=\{e_1,\cdots ,e_m,Je_1,\cdots ,Je_m\}$ at a point such that
	\[
	    H(e_A,e_B)=\lambda_A\delta_{AB}.
	\]
	Then
	\begin{align*}
		\langle \sigma \odot \sigma ,H\rangle 
		=\sum_{A=1}^{2m}\langle \sigma(e_A),\sigma(e_A)\rangle H(e_A,e_A)
		=\sum_{A=1}^{2m}\lambda_A|\sigma(e_A)|^2
		=\sum_{\alpha =1}^m\lambda_i|\sigma(e_\alpha )|^2+\sum_{\alpha =1}^{m}\lambda_{m+i}|\sigma(Je_\alpha )|^2.
	\end{align*}
	Noting that $\sigma $ is $J$-invariant, we have $\sum\limits_{\alpha =1}^m|\sigma(e_\alpha )|=\sum\limits_{\alpha =1}^m|\sigma(Je_\alpha )|=\frac{|\sigma |^2}{2}$. Hence, we deduce
	\begin{align*}
		\langle S_\sigma ,H\rangle 
		=&\frac{1}{2}|\sigma|^2\left(\sum_{\alpha =1}^m\lambda_i+\sum_{\alpha =1}^m\lambda_{m+i}\right)-\left(\sum_{\alpha =1}^m\lambda_i|\sigma(e_\alpha )|^2+\sum_{\alpha =1}^{m}\lambda_{m+i}|\sigma(Je_\alpha )|^2\right)\\
		\ge &\frac{1}{2}|\sigma|^2\left(\sum_{\alpha =1}^m\lambda_i+\sum_{\alpha =1}^m\lambda_{m+i}\right)-\left(\lambda_m\sum_{\alpha =1}^m|\sigma(e_\alpha )|^2+\lambda_{2m}\sum_{\alpha =1}^{m}|\sigma(Je_\alpha )|^2\right)\\
		=&\frac{1}{2}|\sigma |^2\sum_{\alpha =1}^{m-1}(\lambda_i+\lambda_{m+i}).
	\end{align*}
	where we used $\sigma $ is $J$-invariant  . The conclusion follows from $|\sigma |^2=2|\bar \partial u|^2$.
\end{proof}

For two symmetric $(0,2)$-tensor fields $H_1$ and $H_2$ on a manifold, we write $H_1\ge H_2$, meaning that $H_1-H_2$ is nonnegative definite on the whole manifold.
\begin{lemma}\label{lem-g}
	Let $(M,J,g)$ be a complete $2m$-dimensional almost Hermitian manifold with a pole $x_0$ and $r$ the distance function relative to $x_0$. Suppose $f(r)$ be a non-decreasing convex $C^2$ function on $(0,+\infty )$ and $\lambda_1\le \cdots \le \lambda_{2m}$ are the eigenvalues of $\Hess f(r)$. If there exists a positive function $h(r)$ on $(0,+\infty )$ such that 
	\[
	\Hess r\ge h(r)(g-\dd r\otimes \dd r)
	\]
	on $M-\{x_0\}$, then 
	\[
	\sum_{\alpha =1}^m(\lambda_i+\lambda_{m+i})\ge 
	\begin{cases}
		f''(r)+(2m-3)f'(r)h(r) & \mbox{if}\ f'(r)h(r)\ge f''(r),\\
		2(m-1)f'(r)h(r) & \mbox{if}\ f'(r)h(r)<f''(r).
	\end{cases}
	\]
\end{lemma}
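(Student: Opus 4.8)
The plan is to diagonalize $\Hess f(r)$ explicitly using the Hessian comparison hypothesis, then bound the relevant sum of eigenvalues from below by a careful case analysis. First I would recall the standard computation of $\Hess f(r)$ in terms of $\Hess r$: since $f$ is $C^2$ one has
\[
\Hess f(r) = f''(r)\,\dd r\otimes \dd r + f'(r)\,\Hess r.
\]
On the orthogonal complement of $\partial/\partial r$ the hypothesis $\Hess r \ge h(r)(g - \dd r\otimes \dd r)$ gives that every eigenvalue of $f'(r)\Hess r$ restricted to $(\partial_r)^\perp$ is at least $f'(r)h(r)$ (using $f'\ge 0$, which holds because $f$ is non-decreasing). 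Along $\partial_r$ itself, $\Hess r(\partial_r,\cdot)=0$, so the corresponding eigenvalue of $\Hess f(r)$ is exactly $f''(r)$. Thus the eigenvalues $\lambda_1\le\cdots\le\lambda_{2m}$ of $\Hess f(r)$ consist of one value equal to $f''(r)$ together with $2m-1$ values each $\ge f'(r)h(r)$; moreover $\tr_g \Hess f(r) = f''(r) + f'(r)\Delta r$, but I expect only the eigenvalue bounds to be needed.

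The quantity to estimate is $\sum_{\alpha=1}^m(\lambda_\alpha + \lambda_{m+\alpha})$, i.e. the sum over the $m$ smallest eigenvalues plus the sum over the $m$ largest, which is just $\sum_{A=1}^{2m}\lambda_A$ minus nothing — wait, actually $\{1,\dots,m\}\cup\{m+1,\dots,2m\}$ is all of $\{1,\dots,2m\}$, so this sum is simply $\tr_g\Hess f(r)$. Hmm — but then the stated bound with its case split would be puzzling, so I suspect the intended reading (consistent with Lemma~\ref{lem-f}, where the analogous sum $\sum(\lambda_\alpha+\lambda_{m+\alpha})$ really does play the role of a partial trace paired against $|\bar\partial u|^2$) is that one discards the two \emph{middle} eigenvalues $\lambda_m$ and $\lambda_{m+1}$ in the relevant application, or equivalently one is bounding a sum of $2m-2$ eigenvalues. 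Concretely, the right-hand side in Lemma~\ref{lem-f} after the final inequality is $\frac12|\sigma|^2\sum_{\alpha=1}^{m-1}(\lambda_\alpha+\lambda_{m+\alpha})$, a sum of $2m-2$ eigenvalues avoiding $\lambda_m,\lambda_{2m}$ — so what is actually needed here is a lower bound for a sum of $2m-2$ eigenvalues of $\Hess f(r)$, none of which is the largest. The approach is then: among the $2m$ eigenvalues, $2m-1$ are $\ge f'(r)h(r)$ and one equals $f''(r)$. If $f''(r)\le f'(r)h(r)$, then $f''(r)$ is (a candidate for) the smallest eigenvalue and every sum of $2m-2$ eigenvalues that includes it is $\ge f''(r)+(2m-3)f'(r)h(r)$, while any such sum not including it is $\ge (2m-2)f'(r)h(r)\ge f''(r)+(2m-3)f'(r)h(r)$; either way the bound $f''(r)+(2m-3)f'(r)h(r)$ holds. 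If instead $f''(r)> f'(r)h(r)$, then dropping the eigenvalue $f''(r)$ together with one more leaves $2m-2$ eigenvalues each $\ge f'(r)h(r)$, giving $\ge 2(m-1)f'(r)h(r)$; including $f''(r)$ only makes things larger. This matches the two cases in the statement.

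The main obstacle — really a bookkeeping point rather than a genuine difficulty — will be pinning down exactly which sum of eigenvalues is meant by $\sum_{\alpha=1}^m(\lambda_\alpha+\lambda_{m+\alpha})$ in the presence of the $|\bar\partial u|^2$ weighting and the eventual pairing with $S_\sigma$ via Lemma~\ref{lem-f}, and making sure the case hypothesis $f'(r)h(r)\gtrless f''(r)$ is used consistently with the direction in which the middle eigenvalues are discarded. Once that is fixed, the proof is: (1) write $\Hess f(r)=f''\,\dd r\otimes\dd r+f'\,\Hess r$; (2) use $f'\ge0$ and the hypothesis to conclude $2m-1$ eigenvalues are $\ge f'h$ and the remaining one is $f''$; (3) split on the sign of $f'h-f''$ and estimate the required partial sum of eigenvalues in each case, using convexity of $f$ only insofar as it guarantees $f''\ge 0$ is not actually needed — monotonicity gives $f'\ge0$, which is what the comparison step requires. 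I would write the argument at the level of these three steps and leave the elementary inequality manipulations to the reader.
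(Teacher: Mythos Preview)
Your proposal is correct and follows essentially the same line as the paper's proof: compute $\Hess f(r)=f''(r)\,\dd r\otimes\dd r+f'(r)\Hess r$, use $f'\ge 0$ together with the hypothesis to see that one eigenvalue equals $f''(r)$ while the remaining $2m-1$ eigenvalues are $\ge f'(r)h(r)$, and then split on the sign of $f'(r)h(r)-f''(r)$. You also correctly diagnose the indexing slip in the statement---the sum should run over $\alpha=1,\dots,m-1$, consistent with the final displayed line in the proof of Lemma~\ref{lem-f}---and the paper's own argument simply records the inequality $\Hess f(r)\ge f'(r)h(r)\,g+[f''(r)-f'(r)h(r)]\,\dd r\otimes\dd r$ and declares the conclusion obvious.
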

\begin{proof}
	By the assumptions, we have
	\begin{align*}
		\Hess f(r)
		=&f''(r)\dd r\otimes \dd r+f'(r)h(r)\Hess r\\
		\ge &f''(r)\dd r\otimes \dd r+f'(r)h(r)(g-\dd r\otimes \dd r)\\
		=&f'(r)h(r)g+[f''(r)-f'(r)h(r)]\dd r\otimes \dd r.
	\end{align*}
	It is easy to see that the eignvalues of $f'(r)h(r)g+[f''(r)-f'(r)h(r)]\dd r\otimes \dd r$ are $f'(r)h(r)$ with multiplicities $2m-1$ and $f''(r)$ with multiplicity $1$. So the conclusion is obvious.
\end{proof}

For the case where the radial curvature satisfies one of (i),(ii) and (iii) in Lemma~\ref{lem-e}, we take $f(r)=\frac{r^2}{2}$ in Lemma~\ref{lem-g} and get
\begin{lemma}[cf. \cite{MR3149846}]\label{lem-h}
	Let $(M,J,g)$ be a $2m$-dimensional complete almost Hermitian manifold with a pole $x_0$. Suppose the radial curvature $K_r$ of $M$ satisfies one of (i), (ii) and (iii) in Lemma~\ref{lem-e}, where $r$ is the distance function relative to $x_0$. Denote by $\mu_1\le \cdots \le \mu_{2m}$ the eigenvalues of $\Hess \frac{r^2}{2}$. Then
	\begin{equation}\label{eq-t}
		\sum_{\alpha =1}^m(\mu_i+\mu_{m+i})\ge 
		\begin{cases}
			2(m-1) & \mbox{if}\ K_r\ \mbox{satisfies}\ (i)\\
			(m-1)(1+\sqrt{1-4b^2}) & \mbox{if}\ K_r\ \mbox{satisfies}\ (ii)\\
			2(m-1)\left(1-\frac{B}{2\varepsilon }\right) & \mbox{if}\ K_r\ \mbox{satisfies}\ (iii).
		\end{cases}
	\end{equation}
\end{lemma}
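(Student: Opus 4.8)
The plan is to deduce this directly from Lemma~\ref{lem-g} by taking $f(r)=r^2/2$, as indicated in the sentence preceding the statement. First I would record the elementary properties of this $f$: on $(0,+\infty)$ it is $C^2$ with $f'(r)=r>0$ and $f''(r)=1$, hence non-decreasing and convex, so it is an admissible choice in Lemma~\ref{lem-g}; moreover $\Hess\frac{r^2}{2}=\dd r\otimes\dd r+r\,\Hess r$, so its eigenvalues are precisely the $\mu_1\le\cdots\le\mu_{2m}$ appearing in the statement.

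Next I would feed in the radial-curvature hypotheses. In each of cases (i), (ii), (iii), Lemma~\ref{lem-e} supplies a positive function $h(r)$ on $(0,+\infty)$ with $\Hess r\ge h(r)(g-\dd r\otimes\dd r)$, namely $h(r)=1/r$ in case (i), $h(r)=\frac{1+\sqrt{1-4b^2}}{2r}$ in case (ii), and $h(r)=\frac{1-B/(2\varepsilon)}{r}$ in case (iii); positivity is immediate from $b^2\in[0,1/4]$ and $0\le B<2\varepsilon$. Hence Lemma~\ref{lem-g} applies. Since $f'(r)=r$, the product $f'(r)h(r)$ equals $1$, $\frac{1+\sqrt{1-4b^2}}{2}$, and $1-\frac{B}{2\varepsilon}$ in the three cases, so in every case $f'(r)h(r)\le 1=f''(r)$, with equality throughout case (i) (and also when $b=0$ or $B=0$).

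Finally I would read off the bound from Lemma~\ref{lem-g}. In case (i), where $f'(r)h(r)=f''(r)$, the first branch of Lemma~\ref{lem-g} applies (and the second would give the same value), yielding $\sum_{\alpha=1}^m(\mu_i+\mu_{m+i})\ge f''(r)+(2m-3)f'(r)h(r)=2(m-1)$. In cases (ii) and (iii), since $f'(r)h(r)\le f''(r)$, the branch $2(m-1)f'(r)h(r)$ applies and evaluates to $(m-1)(1+\sqrt{1-4b^2})$ and $2(m-1)(1-\frac{B}{2\varepsilon})$ respectively, which is exactly \eqref{eq-t}.

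There is essentially no obstacle here: the argument is a one-line specialization of Lemma~\ref{lem-g} together with the arithmetic of evaluating $f'h$ against $f''$. The only point worth double-checking is the dichotomy in Lemma~\ref{lem-g}, namely that the equality case $f'(r)h(r)=f''(r)$ (which occurs in case (i)) is covered and that the two formulas agree there, so the choice of branch is immaterial.
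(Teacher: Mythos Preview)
Your proposal is correct and follows essentially the same argument as the paper: specialize Lemma~\ref{lem-g} to $f(r)=r^2/2$, compute $f'(r)h(r)=\delta\le 1=f''(r)$ with $\delta=1$, $\tfrac{1+\sqrt{1-4b^2}}{2}$, $1-\tfrac{B}{2\varepsilon}$ respectively, and read off the bound. The paper's proof is the one-sentence version of exactly this, and your care about the borderline $f'(r)h(r)=f''(r)$ (where the two branches of Lemma~\ref{lem-g} coincide) is appropriate but not an obstacle.
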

\begin{proof}
	Notice that $\left(\frac{r^2}{2}\right)'=r, \left(\frac{r^2}{2}\right)''=1$ and $r\cdot \frac{\delta }{r}\le 1$ when $\delta =1,\frac{1+\sqrt{1-4b^2}}{2}$ or $1-\frac{B}{2\varepsilon}$, the conclusion follows directly from Lemma~\ref{lem-g}. 
\end{proof}

When the radial curvature satisfies one of (iv) and (v) in Lemma~\ref{lem-e}, we take $f(r)$ such that it satisfies $f'(r)h(r)\ge f''(r)$. Precisely, we have the following two lemma.
\begin{lemma}\label{lem-i}
	Let $(M,J,g)$ be a $2m$-dimensional complete almost Hermitian manifold with a pole $x_0$. Suppose the radial curvature $K_r$ of $M$ satisfies condition (iv) in Lemma~\ref{lem-e}: $K_r\le -\beta^2$ with $\beta >0$, where $r$ is the distance function relative to $x_0$. Denote by $\xi_1\le \cdots \le \xi_{2m}$ the eigenvalues of $\Hess \cosh(\beta r)$. Then
	\begin{equation}\label{eq-u}
		\sum_{\alpha =1}^m(\xi_i+\xi_{m+i})\ge 2(m-1)\beta^2\cosh(\beta r).
	\end{equation}
\end{lemma}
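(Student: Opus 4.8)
The plan is to apply Lemma~\ref{lem-g} with the choice $f(r)=\cosh(\beta r)$, so I first need to check that this $f$ satisfies the hypotheses of that lemma and that the relevant inequality between $f'h$ and $f''$ holds. We have $f'(r)=\beta\sinh(\beta r)$ and $f''(r)=\beta^2\cosh(\beta r)$, both of which are positive on $(0,+\infty)$, so $f$ is a non-decreasing convex $C^2$ function there. Under condition (iv) in Lemma~\ref{lem-e}, namely $K_r\le -\beta^2$, that same lemma gives the Hessian comparison $\Hess r\ge \beta\coth(\beta r)\,(g-\dd r\otimes\dd r)$, so we may take $h(r)=\beta\coth(\beta r)$, which is indeed a positive function on $(0,+\infty)$.

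Next I would verify which branch of the conclusion of Lemma~\ref{lem-g} applies. Compute
\begin{align*}
	f'(r)h(r)=\beta\sinh(\beta r)\cdot\beta\coth(\beta r)=\beta^2\cosh(\beta r)=f''(r).
\end{align*}
Thus $f'(r)h(r)\ge f''(r)$ (with equality), so Lemma~\ref{lem-g} yields
\begin{align*}
	\sum_{\alpha=1}^m(\xi_i+\xi_{m+i})\ge f''(r)+(2m-3)f'(r)h(r)=\beta^2\cosh(\beta r)+(2m-3)\beta^2\cosh(\beta r)=2(m-1)\beta^2\cosh(\beta r),
\end{align*}
which is exactly \eqref{eq-u}. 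Here $\xi_1\le\cdots\le\xi_{2m}$ are the eigenvalues of $\Hess\cosh(\beta r)=\Hess f(r)$, matching the notation of Lemma~\ref{lem-g}.

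This argument is essentially a direct specialization, so there is no real obstacle beyond the bookkeeping: the only point requiring a moment's care is the identity $\sinh(\beta r)\coth(\beta r)=\cosh(\beta r)$ that collapses $f'h$ onto $f''$, which is what makes the clean constant $2(m-1)\beta^2\cosh(\beta r)$ appear and puts us in the first case of Lemma~\ref{lem-g}. I would also remark that the choice $f(r)=\cosh(\beta r)$ is natural precisely because it is (up to scaling) the solution of $f''=\beta^2 f$, which is the model ODE associated with the curvature bound $K_r\le-\beta^2$; this is the analogue, in the negatively-pinched setting, of the choice $f(r)=r^2/2$ used in Lemma~\ref{lem-h}.
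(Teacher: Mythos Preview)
Your proof is correct and follows essentially the same approach as the paper: both apply Lemma~\ref{lem-g} with $f(r)=\cosh(\beta r)$ and $h(r)=\beta\coth(\beta r)$, verify the identity $f'(r)h(r)=\beta^2\cosh(\beta r)=f''(r)$, and read off the bound $2(m-1)\beta^2\cosh(\beta r)$. Your version simply includes more detail (checking convexity and monotonicity of $f$, and the closing remark on why this choice of $f$ is natural).
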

\begin{proof}
	By Lemma~\ref{lem-e}, (iv), $h(r)=\beta \coth(\beta r)$. Hence we have
	\[
	(\cosh(\beta r))'h(r)=\beta \sinh(\beta r)\cdot \beta \coth(\beta r)=\beta^2\cosh(\beta r)=(\cosh(\beta r))'',
	\]
    and the conclusion follows from Lemma~\ref{lem-g}.
\end{proof}

\begin{lemma}\label{lem-j}
	Let $(M,J,g)$ be a $2m$-dimensional complete almost Hermitian manifold with a pole $x_0$. Suppose the radial curvature $K_r$ of $M$ satisfies condition (v) in Lemma~\ref{lem-e}: $K_r\le -a^2/(1+r^2)$ with $a >0$, where $r$ is the distance function relative to $x_0$. Denote by $\chi_1\le \cdots \le \chi_{2m}$ the eigenvalues of $\Hess \frac{(1+r)^{A+1}}{A+1}$. Then
	\begin{equation}\label{eq-v}
		\sum_{\alpha =1}^m(\chi_i+\chi_{m+i})\ge 
		\max\left\{2(m-1)A(1+r)^{A-1},\ A(1+r)^{A-1}+(2m-3)\frac{(1+r)^A}{r}\right\}
	\end{equation}
\end{lemma}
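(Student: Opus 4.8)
The plan is to apply Lemma~\ref{lem-g} with the specific choice $f(r) = \frac{(1+r)^{A+1}}{A+1}$, so I first need to verify its hypotheses and then compute the right-hand side in both regimes of the case split. Since $A = \frac{1+\sqrt{1+4a^2}}{2} > 1$, the function $f$ is $C^2$, non-decreasing and convex on $(0,+\infty)$, as $f'(r) = (1+r)^A > 0$ and $f''(r) = A(1+r)^{A-1} > 0$. By Lemma~\ref{lem-e}~(v), the hypothesis $\Hess r \ge h(r)(g - \dd r \otimes \dd r)$ holds with $h(r) = \max\left\{\frac{A}{1+r}, \frac{1}{r}\right\}$, so Lemma~\ref{lem-g} is applicable with the eigenvalues $\chi_1 \le \cdots \le \chi_{2m}$ of $\Hess f(r)$.

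Next I would analyze the case distinction in Lemma~\ref{lem-g}, which hinges on comparing $f'(r)h(r)$ with $f''(r)$. When $h(r) = \frac{A}{1+r}$ (i.e.\ when $\frac{A}{1+r} \ge \frac{1}{r}$, equivalently $r \ge \frac{1}{A-1}$), we have $f'(r)h(r) = (1+r)^A \cdot \frac{A}{1+r} = A(1+r)^{A-1} = f''(r)$, so we land exactly on the boundary of the case split, and Lemma~\ref{lem-g} gives $\sum_{\alpha=1}^m(\chi_i + \chi_{m+i}) \ge f''(r) + (2m-3)f'(r)h(r) = A(1+r)^{A-1} + (2m-3)A(1+r)^{A-1} = 2(m-1)A(1+r)^{A-1}$. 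When instead $h(r) = \frac{1}{r}$, we get $f'(r)h(r) = \frac{(1+r)^A}{r}$; since $\frac{1}{r} \ge \frac{A}{1+r}$ here, we have $f'(r)h(r) = \frac{(1+r)^A}{r} \ge \frac{A(1+r)^{A-1}}{1} \ge A(1+r)^{A-1} = f''(r)$ (using $\frac{1+r}{r} \ge A$ in this regime), so again the first branch of Lemma~\ref{lem-g} applies, yielding $\sum_{\alpha=1}^m(\chi_i + \chi_{m+i}) \ge f''(r) + (2m-3)f'(r)h(r) = A(1+r)^{A-1} + (2m-3)\frac{(1+r)^A}{r}$.

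Finally I would combine the two estimates. Since $h(r)$ is the maximum of the two expressions $\frac{A}{1+r}$ and $\frac{1}{r}$, the bound from Lemma~\ref{lem-g} is monotone in $h$, so in all cases $\sum_{\alpha=1}^m(\chi_i + \chi_{m+i})$ dominates each of the two quantities $2(m-1)A(1+r)^{A-1}$ and $A(1+r)^{A-1} + (2m-3)\frac{(1+r)^A}{r}$ obtained by using the respective lower bounds $\frac{A}{1+r}$ and $\frac{1}{r}$ for $h(r)$ — indeed, replacing $h(r)$ by a smaller quantity only decreases the right-hand side of the inequality in Lemma~\ref{lem-g}. Therefore $\sum_{\alpha=1}^m(\chi_i + \chi_{m+i}) \ge \max\left\{2(m-1)A(1+r)^{A-1},\ A(1+r)^{A-1} + (2m-3)\frac{(1+r)^A}{r}\right\}$, which is \eqref{eq-v}.

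The main obstacle, though a mild one, is bookkeeping the case split: one must check that in both sub-cases for $h(r)$ one actually falls in the branch $f'(r)h(r) \ge f''(r)$ of Lemma~\ref{lem-g} (rather than the strict-inequality branch $f'(r)h(r) < f''(r)$), since the inequality $\frac{1+r}{r} \ge A$ holds only on the region where $h(r) = \frac{1}{r}$; and then to argue carefully that taking the maximum in $h(r)$ legitimately transfers to a maximum in the resulting two lower bounds. This is precisely why $f(r) = \frac{(1+r)^{A+1}}{A+1}$ was engineered: the exponent $A+1$ is chosen exactly so that $f'(r)h(r) = f''(r)$ on the outer region, placing us on the favorable branch of Lemma~\ref{lem-g} throughout.
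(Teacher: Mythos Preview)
Your proof is correct and follows the paper's approach exactly: apply Lemma~\ref{lem-g} with $f(r)=(1+r)^{A+1}/(A+1)$ and split according to whether $h(r)=A/(1+r)$ or $h(r)=1/r$; your second paragraph is the paper's argument verbatim. One minor caveat about your final monotonicity remark: substituting the smaller value $h=1/r$ into Lemma~\ref{lem-g} does \emph{not} produce $A(1+r)^{A-1}+(2m-3)(1+r)^A/r$ on the region where $A/(1+r)>1/r$ (there $f'(r)\cdot\tfrac{1}{r}<f''(r)$, so the \emph{second} branch of Lemma~\ref{lem-g} would apply), but this is harmless since on that region the second expression in \eqref{eq-v} is dominated by the first (because $(1+r)/r\le A$ there), so your case-by-case analysis already yields the pointwise maximum.
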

\begin{proof}
	By Lemma~\ref{lem-e}, (iv), $h(r)=\max\left\{\frac{A}{1+r},\frac{1}{r}\right\}$. (1) If $\frac{A}{1+r}\ge \frac{1}{r}$, then $h(r)=\frac{A}{1+r}$, and then
	\[
	\left(\frac{(1+r)^{A+1}}{A+1}\right)'h(r)=A(1+r)^{A-1}=\left(\frac{(1+r)^{A+1}}{A+1}\right)''.
	\]
	It follows from Lemma~\ref{lem-g} that
	\[
	\sum_{\alpha =1}^m(\chi_i+\chi_{m+i})\ge 2(m-1)A(1+r)^{A-1}.
	\]
	(2) If $\frac{A}{1+r}<\frac{1}{r}$, then $h(r)=\frac{1}{r}$, and then
	\[
	\left(\frac{(1+r)^{A+1}}{A+1}\right)'h(r)=(1+r)^A\cdot \frac{1}{r}=(1+r)^{A-1}\cdot \frac{1+r}{r}>A(1+r)^{A-1}=\left(\frac{(1+r)^{A+1}}{A+1}\right)''
	\]
	It follows from Lemma~\ref{lem-g} that
	\[
	\sum_{\alpha =1}^m(\chi_i+\chi_{m+i})\ge A(1+r)^{A-1}+(2m-3)\frac{(1+r)^A}{r}.
	\]
	The proof is completed.
\end{proof}

Next we prove monotonicity formulae for Hermitian pluriharmonic map into K\"ahler manifolds. When the domain manifold is Hermitian or K\"ahler, the following first theorem is part of \cite[Theorem~3.3]{MR3047049} and \cite[Theorem~4.7]{MR3149846}, respectively, and we write it down for completeness as well. For the rest of this section and the next, we assume that the real dimension of $M$ is at least $4$, i.e., $m\ge 2$.

\begin{theorem}\label{thm-b}
	Let $(M,J,g)$ be a $2m$-dimensional complete almost Hermitian manifold with a pole $x_0$ and $r$ the distance function relative to $x_0$. Suppose the radial curvature $K_r$ of $M$ satisfies one of (i), (ii) and (iii) in Lemma~\ref{lem-e}. Suppose $u:M\to N$ is a Hermitian pluriharmonic map into a K\"ahler manifold $N$ and it satisfies $\Im \NN_J\subset \Ker \dd u$. If $|V|\le \frac{C_1}{r}$, where $C_1<D/2$ and 
    \begin{equation*}
		D=
		\begin{cases}
			2(m-1) & \mbox{if}\ K_r\ \mbox{satisfies}\ (i),\\
			(m-1)(1-\sqrt{1-4b^2}) & \mbox{if}\ K_r\ \mbox{satisfies}\ (ii),\\
			2(m-1)\left(1-\frac{B}{2\varepsilon }\right) & \mbox{if}\ K_r\ \mbox{satisfies}\ (iii),
		\end{cases}
	\end{equation*}
	then
	\begin{equation}\label{eq-w}
		\frac{1}{r_1^\lambda }\int_{B_{r_1}(x_0)}|\bar \partial u|^2\le \frac{1}{r_2^\lambda }\int_{B_{r_2}(x_0)}|\bar \partial u|^2
	\end{equation}
	for any $0<r_1\le r_2$, where $\lambda =D-2C_1>0$.
\end{theorem}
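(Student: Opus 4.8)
The plan is to apply the integral formula \eqref{eq-r} with $\omega=\sigma$ on the geodesic balls $D=B_t(x_0)$ and with the radial vector field $X=\grad\tfrac{r^2}{2}=r\,\pdv{r}$, estimate the three terms appearing in it, and turn the result into a differential inequality for $I(t):=\int_{B_t(x_0)}|\bar\partial u|^2$. Two preliminary facts about $\Div S_\sigma$ are needed. From Lemma~\ref{lem-c} and the hypothesis $\Im\NN_J\subset\Ker\dd u$ we get $\dd\sigma=J^N\circ\dd u\circ\NN_J=0$; from Lemma~\ref{lem-a} we get $\delta\sigma=\sigma(V)$. Hence Lemma~\ref{lem-d} gives $(\Div S_\sigma)(X)=\langle\sigma(V),\sigma(X)\rangle=r\langle\sigma(V),\sigma(\pdv{r})\rangle$.

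Next I would estimate the three terms of \eqref{eq-r}. Since $x_0$ is a pole, $r$ and the geodesic spheres are smooth, so \eqref{eq-r} applies to $D=B_t(x_0)$ and $\nabla X^\flat=\Hess\tfrac{r^2}{2}$. Under any of the curvature hypotheses (i)--(iii), Lemma~\ref{lem-h} shows that the opposite-pair eigenvalue sums of $\Hess\tfrac{r^2}{2}$ are at least $D$, so Lemma~\ref{lem-f} gives $\langle S_\sigma,\nabla X^\flat\rangle\ge D|\bar\partial u|^2=\tfrac{D}{2}|\sigma|^2$. For the divergence term, the pointwise inequality $|\sigma(Y)|\le|Y|\,|\sigma|$ (the operator norm of $\sigma$ is dominated by its Hilbert--Schmidt norm $|\sigma|$) together with $|V|\le C_1/r$ gives $|(\Div S_\sigma)(X)|\le r|V|\,|\sigma|^2\le C_1|\sigma|^2$ on $B_t(x_0)\setminus\{x_0\}$. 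On $\partial B_t(x_0)$ the outward unit normal is $\nu=\pdv{r}$ and $X=t\nu$, so $S_\sigma(X,\nu)=t\big(\tfrac12|\sigma|^2-|\sigma(\pdv{r})|^2\big)\le\tfrac{t}{2}|\sigma|^2$.

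Substituting these into \eqref{eq-r} and using $|\sigma|^2=2|\bar\partial u|^2$ I obtain
\[
\frac{t}{2}\int_{\partial B_t(x_0)}|\sigma|^2\,\dd v\;\ge\;\Big(\frac{D}{2}-C_1\Big)\int_{B_t(x_0)}|\sigma|^2\,\dd v\;=\;\frac{\lambda}{2}\int_{B_t(x_0)}|\sigma|^2\,\dd v ,
\]
that is, $t\,I'(t)\ge\lambda\,I(t)$, where $I'(t)=\int_{\partial B_t(x_0)}|\bar\partial u|^2\,\dd v$ by the coarea formula and $\lambda=D-2C_1>0$ by the assumption $C_1<D/2$. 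Consequently $\tfrac{d}{dt}\big(t^{-\lambda}I(t)\big)=t^{-\lambda-1}\big(tI'(t)-\lambda I(t)\big)\ge0$, so $t^{-\lambda}I(t)$ is non-decreasing, and evaluating at $t=r_1$ and $t=r_2$ yields \eqref{eq-w}.

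The genuinely substantial point is the interplay of the stress-energy estimate (Lemma~\ref{lem-f}) with the Hessian comparison (Lemma~\ref{lem-h}), which furnishes the constant $D$; after that, the only thing to watch is that the $V$-harmonicity correction term contributes at most $C_1|\sigma|^2$ and is therefore absorbed precisely when $C_1<D/2$. The remaining ingredients — smoothness of $r$ off the pole, the bound $|\sigma(Y)|\le|Y|\,|\sigma|$, the coarea formula, and the elementary one-dimensional comparison — are routine.
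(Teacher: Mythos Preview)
Your proposal is correct and follows essentially the same route as the paper's proof: the same choice $X=r\,\partial_r$, the same use of Lemmas~\ref{lem-a}, \ref{lem-c}, \ref{lem-d} to reduce $(\Div S_\sigma)(X)$ to $\langle\sigma(V),\sigma(X)\rangle$, the same combination of Lemmas~\ref{lem-f} and \ref{lem-h} to obtain $\langle S_\sigma,\nabla X^\flat\rangle\ge D|\bar\partial u|^2$, and the same boundary estimate and differential inequality. The only cosmetic difference is that you conclude via $\tfrac{d}{dt}\big(t^{-\lambda}I(t)\big)\ge 0$, while the paper integrates $I'(t)/I(t)\ge\lambda/t$; your version has the minor advantage of not needing to divide by $I(t)$.
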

\begin{proof}
	If choose $X=\grad \frac{r^2}{2}=r\grad r$, then $X^\flat =\dd \frac{r^2}{2}=r\dd r$ and $\nabla X^\flat =\Hess \frac{r^2}{2}$. We evaluate each integrand in the integral formula \eqref{eq-r}. We have the following inequality on $\partial B_r(x_0)$,
	\begin{equation}\label{eq-x}
		\begin{split}
			S_\sigma(X,\nu )=&\frac{1}{2}|\sigma |^2\left \langle r\pdv{r},\pdv{r}\right \rangle -\left \langle \sigma\left(r\pdv{r}\right),\sigma\left(\pdv{r}\right)\right \rangle \\
            \le & r|\bar \partial u|^2.
		\end{split}
	\end{equation}
	It follows from Lemmas~\ref{lem-h} and \ref{lem-f}, in which $H=\nabla X^\flat =\Hess \frac{r^2}{2}$, that
    \begin{equation}\label{eq-y}
    	\langle S_\sigma ,\nabla X^\flat \rangle \ge D|\bar \partial u|^2
    \end{equation}
    in $B_r(x_0)$. By Lemmas~\ref{lem-a}, \ref{lem-c} and Lemma~\ref{lem-d} with $\omega =\sigma $, combined with conditions $\Im \NN_J\subset \Ker \dd u$ and $|V|=|V|\le \frac{C}{r}$, we have
    \begin{equation}\label{eq-z}
        \begin{split}
        	(\Div S_\sigma )(X)
        	=&\langle \sigma(V),\sigma(X)\rangle \\
        	\ge &-|V||X||\sigma |^2\\
        	\ge &-2C_1|\bar \partial u|^2
        \end{split} 
    \end{equation}
    in $B_r(x_0)$. By substituting \eqref{eq-x}, \eqref{eq-y} and \eqref{eq-z} into \eqref{eq-r} yields
    \[
    r\int_{\partial B_r(x_0)}|\bar \partial u|^2\ge \lambda \int_{B_r(x_0)}|\bar \partial u|^2.
    \]
    It is obvious from this inequality that
    \[
    \frac{\dv{r}\int_{\partial B_r(x_0)}|\bar \partial u|^2}{\int_{B_r(x_0)}|\bar \partial u|^2}\ge \frac{\lambda }{r}
    \]
    for any $r>0$. By integration over $[r_1,r_2]$, we obtain \eqref{eq-w}.
\end{proof}

\begin{theorem}\label{thm-c}
	Let $(M,J,g)$ be a $2m$-dimensional complete almost Hermitian manifold with a pole $x_0$ and $r$ the distance function relative to $x_0$. Suppose $u:M\to N$ is a Hermitian pluriharmonic map into a K\"ahler manifold $N$ and it satisfies $\Im \NN_J\subset \Ker \dd u$.
	\begin{enumerate}[(1)]
	    \item Suppose the radial curvature $K_r$ of $M$ satisfies $K_r\le -\beta^2$ with $\beta >0$. If $|V|\le \coth(\beta r)C_2$, where $C_2<(m-1)\beta $, then
        \begin{equation}\label{eq-2a}
        	\frac{1}{\sinh^{2(m-1)-2C_2/\beta }(\beta r_1)}\int_{B_{r_1}(x_0)}\cosh(\beta r)|\bar \partial u|^2\le \frac{1}{\sinh^{2(m-1)-2C_2/\beta }(\beta r_2)}\int_{B_{r_2}(x_0)}\cosh(\beta r)|\bar \partial u|^2
        \end{equation}
        for any $0<r_1\le r_2$.
		\item Suppose the radial curvature $K_r$ of $M$ satisfies $K_r\le -a^2/(1+r^2)$ with $a>0$. If $|V|\le \frac{C_3}{r+1}$, where $C_3<(m-1)A$, then
		\begin{equation}\label{eq-2b}
		    \frac{1}{(1+r_1)^{2(m-1)A-2C_3}}\int_{B_{r_1}(x_0)}(1+r)^{A-1}|\bar \partial u|^2\le \frac{1}{(1+r_2)^{2(m-1)A-2C_3}}\int_{B_{r_2}(x_0)}(1+r)^{A-1}|\bar \partial u|^2
		\end{equation}
		for any $0<r_1\le r_2$.
	\end{enumerate}
\end{theorem}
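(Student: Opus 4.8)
The plan is to repeat the argument of Theorem~\ref{thm-b}, applying the integral formula~\eqref{eq-r} with $\omega=\sigma$, but replacing the radial field $X=\grad\frac{r^2}{2}$ by one adapted to the curvature hypothesis. For part (1) I would take $X=\grad\cosh(\beta r)=\beta\sinh(\beta r)\grad r$, so that $X^\flat=\dd\cosh(\beta r)$ and $\nabla X^\flat=\Hess\cosh(\beta r)$; for part (2) I would take $X=\grad\frac{(1+r)^{A+1}}{A+1}=(1+r)^A\grad r$, so that $\nabla X^\flat=\Hess\frac{(1+r)^{A+1}}{A+1}$. These are exactly the functions for which Lemmas~\ref{lem-i} and~\ref{lem-j} supply lower bounds on $\sum_{\alpha=1}^m(\xi_i+\xi_{m+i})$, respectively $\sum_{\alpha=1}^m(\chi_i+\chi_{m+i})$.

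With $X$ so chosen, I would estimate the three integrands in~\eqref{eq-r}. On $\partial B_r(x_0)$ one has $\nu=\pdv{r}$, and since $X$ is a positive multiple of $\nu$ there, $\langle\sigma(X),\sigma(\nu)\rangle\ge0$; together with $|\sigma|^2=2|\bar\partial u|^2$ this gives, as in~\eqref{eq-x}, $S_\sigma(X,\nu)\le\beta\sinh(\beta r)|\bar\partial u|^2$ in part (1) and $S_\sigma(X,\nu)\le(1+r)^A|\bar\partial u|^2$ in part (2). For the interior term, Lemma~\ref{lem-f} with $H=\nabla X^\flat$ combined with~\eqref{eq-u}, resp. with the first entry of the maximum in~\eqref{eq-v}, yields $\langle S_\sigma,\nabla X^\flat\rangle\ge 2(m-1)\beta^2\cosh(\beta r)|\bar\partial u|^2$, resp. $\langle S_\sigma,\nabla X^\flat\rangle\ge 2(m-1)A(1+r)^{A-1}|\bar\partial u|^2$, on $B_r(x_0)$. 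For the divergence term, the hypothesis $\Im\NN_J\subset\Ker\dd u$ forces $\dd\sigma=J^N\circ\dd u\circ\NN_J=0$ by Lemma~\ref{lem-c}, so Lemma~\ref{lem-d} and Lemma~\ref{lem-a} give $(\Div S_\sigma)(X)=\langle\delta\sigma,\iota_X\sigma\rangle=\langle\sigma(V),\sigma(X)\rangle\ge-|V||X||\sigma|^2$; plugging in $|X|=\beta\sinh(\beta r)$ with $|V|\le\coth(\beta r)C_2$, resp. $|X|=(1+r)^A$ with $|V|\le\frac{C_3}{r+1}$, this becomes $(\Div S_\sigma)(X)\ge-2C_2\beta\cosh(\beta r)|\bar\partial u|^2$, resp. $(\Div S_\sigma)(X)\ge-2C_3(1+r)^{A-1}|\bar\partial u|^2$.

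Finally I would substitute the three estimates into~\eqref{eq-r}. Setting $I(r)=\int_{B_r(x_0)}\cosh(\beta r)|\bar\partial u|^2$ in part (1), resp. $I(r)=\int_{B_r(x_0)}(1+r)^{A-1}|\bar\partial u|^2$ in part (2), the co-area formula turns the boundary integral into $\beta\tanh(\beta r)\,I'(r)$, resp. $(1+r)\,I'(r)$, and one is left with the differential inequalities $\tanh(\beta r)\,I'(r)\ge 2\bigl((m-1)\beta-C_2\bigr)I(r)$ and $(1+r)\,I'(r)\ge 2\bigl((m-1)A-C_3\bigr)I(r)$. Dividing by $I(r)$, rewriting the right-hand side as $\bigl(2(m-1)-2C_2/\beta\bigr)\frac{d}{dr}\ln\sinh(\beta r)$, resp. $\bigl(2(m-1)A-2C_3\bigr)\frac{d}{dr}\ln(1+r)$, and integrating over $[r_1,r_2]$ yields~\eqref{eq-2a} and~\eqref{eq-2b}. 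The only point requiring care is the bookkeeping that makes the exponents come out exactly as $2(m-1)-2C_2/\beta$ and $2(m-1)A-2C_3$; this is precisely what dictates the choices of $X$ above (the gradient of $r^2/2$ used in Theorem~\ref{thm-b} would not produce the right weights here), and aside from it — and the trivial verifications $\langle\sigma(X),\sigma(\nu)\rangle\ge0$ and $\dd\sigma=0$ — the argument presents no analytic obstacle beyond what was already handled in Theorem~\ref{thm-b}.
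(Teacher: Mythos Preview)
Your proposal is correct and follows the paper's proof essentially verbatim: the same choices of $X=\grad\cosh(\beta r)$ and $X=\grad\frac{(1+r)^{A+1}}{A+1}$, the same three estimates (boundary via $|\sigma|^2=2|\bar\partial u|^2$, interior via Lemmas~\ref{lem-f}, \ref{lem-i}, \ref{lem-j}, divergence via Lemmas~\ref{lem-a}, \ref{lem-c}, \ref{lem-d}), and the same integration of the resulting logarithmic differential inequality. Your use of the co-area formula to pass from $\int_{\partial B_r}|\bar\partial u|^2$ to $I'(r)$ is a clean way to phrase what the paper writes out directly, but the argument is otherwise identical.
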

\begin{proof}
    (1) If choose $X=\grad \cosh(\beta r)=\beta \sinh(\beta r)\grad r$, then $X^\flat =\dd \cosh(\beta r)=\beta \cosh(\beta r)\dd r$ and $\nabla X^\flat =\Hess \cosh(\beta r)$. We have 
	\begin{equation*}
		\begin{split}
			S_\sigma(X,\nu )=&\frac{1}{2}|\sigma |^2\left \langle \beta \sinh(\beta r)\pdv{r},\pdv{r}\right \rangle -\left \langle \sigma\left(\beta \sinh(\beta r)\pdv{r}\right),\sigma\left(\pdv{r}\right)\right \rangle \\
            \le & \beta \sinh(\beta r)|\bar \partial u|^2
		\end{split}
	\end{equation*}
	on $\partial B_r(x_0)$. It follows from Lemma~\ref{lem-i} and Lemma~\ref{lem-f}, in which $H=\nabla X^\flat =\Hess \cosh(\beta r)$, that
    \begin{equation*}
    	\langle S_\sigma ,\nabla X^\flat \rangle \ge 2(m-1)\beta^2\cosh(\beta r)|\bar \partial u|^2
    \end{equation*}
    in $B_r(x_0)$. By Lemmas~\ref{lem-a}, \ref{lem-c} and Lemma~\ref{lem-d} with $\omega =\sigma $, combined with conditions $\Im \NN_J\subset \Ker \dd u$ and $|V|\le \coth(\beta r)C_2$, we have
    \begin{equation*}
        \begin{split}
        	(\Div S_\sigma )(X)
        	=&\langle \sigma(V),\sigma(X)\rangle \\
        	\ge &-|V||X||\sigma |^2\\
        	\ge &-2\coth(\beta r)C_2\beta \sinh(\beta r)|\bar \partial u|^2\\
        	= &-2C_2\beta \cosh(\beta r)|\bar \partial u|^2
        \end{split} 
    \end{equation*}
    in $B_r(x_0)$. Substitute these inequalities into \eqref{eq-r}, we get
    \[
    \sinh(\beta r)\int_{\partial B_r(x_0)}|\bar \partial u|^2\ge (2(m-1)\beta-2C_2)\int_{B_r(x_0)}\cosh(\beta r(x))|\bar \partial u|^2. 
    \]
    This implies 
    \begin{align*}
    	\frac{\dv{r}\int_{B_r(x_0)}\cosh(\beta r(x))|\bar \partial u|^2}{\int_{B_r(x_0)}\cosh(\beta r(x))|\bar \partial u|^2}
    	\ge &\frac{(2(m-1)\beta -2C_2)\cosh(\beta r)}{\sinh(\beta r)}\\
        =& (2(m-1)-2C_2/\beta )\frac{\dv{r}\sinh (\beta r)}{\sinh (\beta r)}
    \end{align*}
    for any $r>0$. By integration over $[r_1,r_2]$, we obtain \eqref{eq-2a}.

	(2) If choose $X=\grad \frac{(1+r)^{A+1}}{A+1}=(1+r)^A\grad r$, then $X^\flat =\dd \frac{(1+r)^{A+1}}{A+1}=(1+r)^A\dd r$ and $\nabla X^\flat =\Hess \frac{(1+r)^{A+1}}{A+1}$. We have
	\[
	S_\sigma(X,\nu )\le (1+r)^A|\bar \partial u|^2
	\]
	on $\partial B_r(x_0)$. By Lemmas~\ref{lem-j} and \ref{lem-f} with $H=\nabla X^\flat =\Hess \frac{(1+r)^{A+1}}{A+1}$, we have 
	\[
	\langle S_\sigma ,\nabla X^\flat \rangle \ge 2(m-1)A(1+r)^{A-1}|\bar \partial u|^2
	\]
	in $B_r(x_0)$. By the assumptions we also have
	\[
	\begin{split}
        (\Div S_\sigma )(X)
        =&\langle \sigma(V),\sigma(X)\rangle \\
        \ge &-|V||X||\sigma |^2\\
        \ge &-2C_3(1+r)^{A-1}|\bar \partial u|^2
    \end{split} 
	\]
	in $B_r(x_0)$. Substituting these inequalities into \eqref{eq-r}, we get
	\[
    (1+r)^A\int_{\partial B_r(x_0)}|\bar \partial u|^2\ge (2(m-1)A-2C_3)\int_{B_r(x_0)}(1+r(x))^{A-1}|\bar \partial u|^2.
    \]
    This implies
    \[
    \frac{\dv{r}\int_{B_r(x_0)}(1+r(x))^{A-1}|\bar \partial u|^2}{\int_{B_r(x_0)}(1+r(x))^{A-1}|\bar \partial u|^2}\ge \frac{2(m-1)A-2C_3}{1+r}
    \]
    for any $r>0$. By integration over $[r_1,r_2]$, we obtain \eqref{eq-2b}.
\end{proof}

\begin{remark}
	Theorems~\ref{thm-b} and \ref{thm-c} extend \cite[Theorem~4.7]{MR3149846} and \cite[Theorem~3.3]{MR3047049} to the case of Hermitian pluriharmonic maps originating from almost Hermitian manifolds. Specially, when the domain manifold is Hermitian or K\"ahler, Theorem~\ref{thm-c} improves upon their results under curvature condition (iv) and (v) in Lemma~\ref{lem-e}. additionally, note that as $r\to 0$, $\coth(\beta r)$ and $1/r$ are of the same order, and $\coth(\beta r)>1$ on $(0,+\infty )$. Thus, compared to \cite[Theorem~3.3]{MR3047049}, the assumption on the length of $V$ is more relaxed under curvature condition (iv). In addition, we note that when the domain manifold degenerates into a K\"ahler manifold and its radial curvature satisfies $K_r\le -\beta^2<0$, the corresponding monotonicity formula has already been obtained by Li \cite{MR3459969}.
\end{remark}

\begin{remark}
	Since our ultimate goal is to obtain the holomorphicity of the Hermitian pluriharmonic maps, and for holomorphic maps, we have $\dd u(\NN_J(X,Y))=\NN_{J^N}(\dd u(X),\dd u(Y))$ for any $X,Y\in \Gamma(TM)$. Thus, when the target manifold is a K\"ahler manifold, it naturally follows that $\dd u(\NN_J(X,Y))=0$, giving us reason to assume $\Im \NN_J\subset \Ker \dd u$. Of course, if more specifically $\NN_J=0$, then our theorems concern Hermitian pluriharmonic maps from Hermitian manifolds into K\"ahler manifolds. 
\end{remark}

Now we present an example where $|V|=O(1)$ near infinity. For $|V|=O(1/r)$ near infinity, see \cite[Example~3.8]{MR3047049}, which provides an example on a conformal K\"ahler manifold. In fact, this also applies to conformal semi-K\"ahler manifolds, so it will not be repeated here. From \eqref{eq-2c}, it is evident that conformal semi-K\"ahler manifolds constitute a larger class of almost Hermitian manifolds than conformal K\"ahler manifolds.
\begin{example}
	Let $(M^{2m},J,g)$ be a complete semi-K\"ahler manifold with a pole $x_0$. Denote by $\nabla^g $ and $r(x)$ the $g$-Levi--Civita connection and the $g$-distance function, respectively. Assume $\varphi =\varphi (t)$ is a smooth positive function on $(0,+\infty )$, and consider the conformal metric $\tilde{g}=\varphi^2(r)g$. We know that for $X,Y\in \Gamma(TM)$
	\[
    \nabla^{\tilde{g}}_XY=\nabla^g_XY+X(\ln \varphi )Y+Y(\ln \varphi )X-g(X,Y)\grad_g\ln \varphi ,
	\]
	where $\nabla^{\tilde{g}}$ is the $\tilde{g}$-Levi--Civita connection. From this relation, it is easy to compute that
	\[
	V=-J\delta_{\tilde{g}}J=\frac{2(1-m)\varphi '(r)}{\varphi^3(r)}\grad r.
	\]
	Thus, if $\varphi'(r)<0$, we obtain a complete almost Hermitian manifold $(M,J,\tilde{g})$ on which 
	\[
	|V|_{\tilde{g}}=\frac{2(1-m)\varphi '(r)}{\varphi^4(r)}.
	\]
	We let 
	\[
	\frac{2(1-m)\varphi '(r)}{\varphi^4(r)}=C
	\]
	and integrate this over $[1,r]$ to obtain
	\[
	\varphi (r)=\left(\varphi^{-3}(1)+\frac{3C}{2(m-1)}(r-1)\right)^{-1/3}.
	\] 
	We redefine it as follows (still denoted as $\varphi $):
	\[
	\varphi (r)=
	\begin{cases}
		\varphi(1), & r<1,\\
        \left(\varphi^{-3}(1)+\frac{3C}{2(m-1)}(r-1)\right)^{-1/3}, & r\ge 1.
	\end{cases}
	\]
	We smooth $\varphi (r)$ near $r=1$ to obtain a function $\tilde{\varphi }(r)$ such that
	\[
	\frac{2(1-m)\tilde{\varphi }'(r)}{\tilde{\varphi }^4(r)}\le C.
	\]
	It is clear that the almost Hermitian manifold $(M^{2m},J,\tilde{g}=\tilde{\varphi }^2(r)g)$ satisfies $|V|_{\tilde{g}}\le C$.
\end{example}

Next, we hope to establish monotonicity formulae outside a compact subset of a complete almost Hermitian manifold.
\begin{theorem}\label{thm-d}
	Let $(M,J,g)$ be a $2m$-dimensional complete almost Hermitian manifold with a pole $x_0$ and $r$ the distance function relative to $x_0$. Suppose $u:M\to N$ is a Hermitian pluriharmonic map into a K\"ahler manifold $N$ and it satisfies $\Im \NN_J\subset \Ker \dd u$. 
	\begin{enumerate}[(a)]
	    \item Suppose the radial curvature $K_r$ of $M$ satisfies $K_r\le -\beta^2$ with $\beta >0$. If $|V|\le \coth(\beta r)C_2'$ on $M\setminus B_{R_0}(x_0)$, where $R_0>0$ and $C_2'<(m-1)\beta $, then
        \begin{equation}\label{eq-2d}
        	\frac{1}{\sinh^{2(m-1)-2C_2'/\beta }(\beta r_1)}\int_{B_{r_1}(x_0)\setminus B_{R_0}(x_0)}\cosh(\beta r)|\bar \partial u|^2\le \frac{1}{\sinh^{2(m-1)-2C_2'/\beta }(\beta r_2)}\int_{B_{r_2}(x_0)\setminus B_{R_0}(x_0)}\cosh(\beta r)|\bar \partial u|^2
        \end{equation}
        for any $R_0<r_1\le r_2$. 
		\item Suppose the radial curvature $K_r$ of $M$ satisfies $K_r\le -a^2/(1+r^2)$ with $a>0$. If $|V|\le \frac{C_3'}{r+1}$ on $M\setminus B_{R_0}(x_0)$, where $R_0>0$, $C_3'<C(R_0)/2$ and
		\[
        C(R_0)=
        \begin{cases}
        	2(m-1)A & \mbox{if}\ \frac{AR_0}{1+R_0}\ge 1,\\
        	A+(2m-3)\left(1+\frac{1}{R_0}\right) & \mbox{if}\ \frac{AR_0}{1+R_0}<1,
        \end{cases}
		\]        	
        then
		\begin{equation}\label{eq-2e}
		    \frac{1}{(1+r_1)^{C(R_0)-2C_3'}}\int_{B_{r_1}(x_0)\setminus B_{R_0}(x_0)}(1+r)^{A-1}|\bar \partial u|^2\le \frac{1}{(1+r_2)^{C(R_0)-2C_3'}}\int_{B_{r_2}(x_0)\setminus B_{R_0}(x_0)}(1+r)^{A-1}|\bar \partial u|^2
		\end{equation}
		for any $R_0<r_1\le r_2$. 
	\end{enumerate}
\end{theorem}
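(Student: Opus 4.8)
The plan is to run the argument of Theorem~\ref{thm-c} essentially verbatim, changing only the domain of integration in the integral formula \eqref{eq-r}: instead of a ball I would apply \eqref{eq-r} over the annulus $D=B_r(x_0)\setminus B_{R_0}(x_0)$, keeping the same gradient vector fields, namely $X=\grad\cosh(\beta r)$ in case (a) and $X=\grad\frac{(1+r)^{A+1}}{A+1}$ in case (b). Now $\partial D=\partial B_r(x_0)\cup\partial B_{R_0}(x_0)$, with unit outward normal $\pdv{r}$ on the outer sphere and $-\pdv{r}$ on the inner one, so \eqref{eq-r} reads
\[
\int_{\partial B_r(x_0)}S_\sigma\!\left(X,\pdv{r}\right)\dd v-\int_{\partial B_{R_0}(x_0)}S_\sigma\!\left(X,\pdv{r}\right)\dd v=\int_D\left\{\langle S_\sigma,\nabla X^\flat\rangle+(\Div S_\sigma)(X)\right\}\dd v.
\]

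The first thing I would check is the sign of the inner boundary term, which is the only genuinely new ingredient. On $\partial B_{R_0}(x_0)$ one has $X=f'(R_0)\pdv{r}$ with $f'(R_0)>0$ (where $f=\cosh(\beta r)$, resp.\ $f=\frac{(1+r)^{A+1}}{A+1}$), so
\[
S_\sigma\!\left(X,\pdv{r}\right)=f'(R_0)\left(\tfrac{1}{2}|\sigma|^2-\left|\sigma\!\left(\pdv{r}\right)\right|^2\right)\ge0,
\]
because the $J$-invariance of $\sigma$ forces $2\left|\sigma\!\left(\pdv{r}\right)\right|^2\le|\sigma|^2$ --- the same estimate that already produces \eqref{eq-x}. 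Hence the inner boundary contribution sits on the favourable side of the identity and can simply be discarded, leaving
\[
f'(r)\int_{\partial B_r(x_0)}|\bar\partial u|^2\ \ge\ \int_D\left\{\langle S_\sigma,\nabla X^\flat\rangle+(\Div S_\sigma)(X)\right\}\dd v.
\]

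Next I would estimate the two integrands on $D$, and here everything transfers from Theorem~\ref{thm-c}. The key observation is that the hypothesis on $|V|$ is now imposed exactly on $M\setminus B_{R_0}(x_0)\supseteq D$, so Lemmas~\ref{lem-a}, \ref{lem-c} and \ref{lem-d} (with $\omega=\sigma$), together with $\Im\NN_J\subseteq\Ker\dd u$, give pointwise on $D$ the divergence bounds $(\Div S_\sigma)(X)\ge-2C_2'\beta\cosh(\beta r)|\bar\partial u|^2$ in case (a) and $(\Div S_\sigma)(X)\ge-2C_3'(1+r)^{A-1}|\bar\partial u|^2$ in case (b). For the Hessian term I would apply Lemma~\ref{lem-f} with $H=\nabla X^\flat$, combined with Lemma~\ref{lem-i} (case (a)) or Lemma~\ref{lem-j} (case (b)), at every point of $D$; in case (a) this yields $\langle S_\sigma,\nabla X^\flat\rangle\ge2(m-1)\beta^2\cosh(\beta r)|\bar\partial u|^2$, while in case (b) one must distill a single exponent $C(R_0)$ for which $\langle S_\sigma,\nabla X^\flat\rangle\ge C(R_0)(1+r)^{A-1}|\bar\partial u|^2$ holds throughout $D$. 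The two branches defining $C(R_0)$ should correspond precisely to whether the inequality $\frac{A}{1+r}\ge\frac{1}{r}$, i.e.\ $r\ge\frac{1}{A-1}$, holds on the whole annulus $\{r\ge R_0\}$ or fails near its inner edge, where the sharper half of the comparison in Lemma~\ref{lem-j} is active.

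Finally I would assemble the pieces: setting $\Psi(r)=\int_{B_r(x_0)\setminus B_{R_0}(x_0)}w(r(x))|\bar\partial u|^2$ with $w(t)=\cosh(\beta t)$ in case (a) and $w(t)=(1+t)^{A-1}$ in case (b), the coarea formula gives $\Psi'(r)=w(r)\int_{\partial B_r(x_0)}|\bar\partial u|^2$ for $r>R_0$, so the inequality above turns into a first-order differential inequality $\Psi'(r)/\Psi(r)\ge\varphi'(r)/\varphi(r)$, with $\varphi(r)=\sinh^{2(m-1)-2C_2'/\beta}(\beta r)$ in case (a) and $\varphi(r)=(1+r)^{C(R_0)-2C_3'}$ in case (b), exactly as in the proof of Theorem~\ref{thm-c} but with the radial variable now running over $(R_0,\infty)$; integrating over $[r_1,r_2]$ (the case $\Psi(r_1)=0$ being trivial) then yields \eqref{eq-2d} and \eqref{eq-2e}. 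The main obstacle is precisely the treatment of the inner boundary term, which is settled by the sign computation above; a smaller technical point is pinning down the uniform Hessian exponent $C(R_0)$ over the annulus in case (b).
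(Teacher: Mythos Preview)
Your proposal is correct and mirrors the paper's proof almost exactly: apply \eqref{eq-r} on the annulus $D=B_r(x_0)\setminus B_{R_0}(x_0)$ with the same gradient vector fields as in Theorem~\ref{thm-c}, discard the inner boundary integral using $S_\sigma\!\left(\pdv{r},\pdv{r}\right)=\tfrac{1}{2}|\sigma|^2-|\sigma(\pdv{r})|^2\ge 0$ (precisely your $J$-invariance observation), and then repeat the interior estimates and the integration of the resulting differential inequality verbatim. The paper likewise leaves the determination of $C(R_0)$ in case~(b) as a reference back to Lemma~\ref{lem-j}, so your flagged ``smaller technical point'' is handled no more explicitly there than in your outline.
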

\begin{proof}
	(a) Take $X=\grad \cosh(\beta r)=\beta \sinh(\beta r)\grad r$. For any $r>R_0$, we let $D=B_r(x_0)\setminus B_{R_0}(x_0)$ and apply \eqref{eq-r} on $D$.  First, using Lemmas~\ref{lem-f} and \ref{lem-g}, we have
	\begin{align*}
		\int_{\partial D}S_\sigma (X,\nu )
		=&\int_{\partial B_r(x_0)}S_\sigma \left(X,\pdv{r} \right)-\int_{\partial B_{R_0}(x_0)}S_\sigma \left(X,\pdv{r} \right)\\
		=&\beta \sinh(\beta r)\int_{\partial B_r(x_0)}S_\sigma \left(\pdv{r},\pdv{r} \right)-\beta \sinh(\beta R_0)\int_{\partial B_{R_0}(x_0)}S_\sigma \left(\pdv{r},\pdv{r} \right)\\
		\le &\beta \sinh(\beta r)\int_{\partial B_r(x_0)}S_\sigma \left(\pdv{r},\pdv{r} \right)\\
		\le &\beta \sinh(\beta r)\int_{\partial B_r(x_0)}|\bar \partial u|^2.
	\end{align*}
	since $S_\sigma (\nu ,\nu )=\frac{|\sigma |^2}{2}-\left \langle \sigma\left(\pdv{r}\right),\sigma\left(\pdv{r}\right)\right \rangle \ge 0$. On the other hand, by \eqref{eq-r} and the proof of Theorem~\ref{thm-c}, we have 
	\begin{align*}
		\int_{\partial D}S_\sigma (X,\nu )
		=&\int_{B_r(x_0)\setminus B_{R_0}(x_0)}\left\{\langle S_\sigma ,\nabla X^\flat \rangle +(\Div S_\sigma )(X)\right\}\\
		\ge &\left(2(m-1)\beta^2-2C_2'\beta \right)\int_{B_r(x_0)\setminus B_{R_0}(x_0)}\cosh(\beta r(x))|\bar \partial u|^2
	\end{align*}
	By combining these two inequalities, we obtain
	\begin{align*}
		\sinh(\beta r)\int_{\partial B_r(x_0)}|\bar \partial u|^2\ge \left(2(m-1)\beta -2C_2'\right)\int_{B_r(x_0)\setminus B_{R_0}(x_0)}\cosh(\beta r(x))|\bar \partial u|^2.
	\end{align*}
	It follows that
	\begin{align*}
    	\frac{\dv{r}\int_{B_r(x_0)\setminus B_{R_0}(x_0)}\cosh(\beta r(x))|\bar \partial u|^2}{\int_{B_r(x_0)\setminus B_{R_0}(x_0)}\cosh(\beta r(x))|\bar \partial u|^2}
    	\ge &\frac{(2(m-1)\beta -2C_2')\cosh(\beta r)}{\sinh(\beta r)}\\
        =& (2(m-1)-2C_2'/\beta )\frac{\dv{r}\sinh (\beta r)}{\sinh (\beta r)}
    \end{align*}
    for any $r>R$. By integration over $[r_1,r_2]$, we obtain \eqref{eq-2d}.

    (b) Take $X=\grad \frac{(1+r)^{A+1}}{A+1}$, apply \eqref{eq-r} on $D=B_r(x_0)\setminus B_{R_0}(x_0)$. The remaining steps of the proof are similar to the proof of (a), where the value of $C(R_0)$ only requires attention to Lemma~\ref{lem-j}. 
\end{proof}


\section{Holomorphicity and constancy of Hermitian pluriharmonic maps} 
\label{sec:holomorphicity_of_hermitian_pluriharmonic_maps}

In this section, we use the monotonicity formula, under certain growth assumptions on the partial energy $E''$, to obtain the holomorphicity of Hermitian pluriharmonic maps. 

\begin{theorem}\label{thm-e}
	Under the conditions of Theorem~\ref{thm-b}, furthermore, if the partial energy of $u$ satisfies 
	\begin{equation}\label{eq-2f}
		\int_{B_r(x_0)}|\bar \partial u|^2=o(r^\lambda )\quad \mbox{as}\ r\to \infty ,
	\end{equation}
	then $u$ is holomorphic. 
\end{theorem}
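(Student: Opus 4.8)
The plan is to combine the monotonicity formula from Theorem~\ref{thm-b} with the growth hypothesis \eqref{eq-2f} to force the partial energy $E''(u) = \int_M |\bar\partial u|^2$ to vanish, and then to conclude holomorphicity from the vanishing of $\bar\partial u$. First I would fix an arbitrary radius $r_1 > 0$ and invoke inequality \eqref{eq-w}: for every $r_2 \ge r_1$,
\[
\frac{1}{r_1^\lambda}\int_{B_{r_1}(x_0)}|\bar\partial u|^2 \le \frac{1}{r_2^\lambda}\int_{B_{r_2}(x_0)}|\bar\partial u|^2.
\]
Now let $r_2 \to \infty$. By the growth assumption \eqref{eq-2f}, the right-hand side is $o(r_2^\lambda)/r_2^\lambda \to 0$. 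Hence $\int_{B_{r_1}(x_0)}|\bar\partial u|^2 = 0$ for every $r_1$, and since $M = \bigcup_{r_1 > 0} B_{r_1}(x_0)$ (the manifold is complete with a pole, so geodesic balls exhaust $M$), we get $\int_M |\bar\partial u|^2 = 0$.

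Next I would conclude that $\bar\partial u \equiv 0$ on $M$: since $|\bar\partial u|^2$ is a nonnegative continuous function with vanishing integral, it is identically zero. By the discussion in Section~\ref{sub:the_stress_energy_tensors} (specifically the equivalence ``$u$ holomorphic $\iff \bar\partial u = 0$'', which follows from the orthogonal decomposition \eqref{eq-h}), this means precisely that $\dd u \circ J = J^N \circ \dd u$, i.e. $u$ is holomorphic. That completes the argument.

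The only point requiring a little care — and the one I would flag as the main (minor) obstacle — is the passage to the limit $r_2 \to \infty$ in the monotonicity inequality and confirming that the balls $B_{r_2}(x_0)$ genuinely exhaust $M$; this is where completeness and the pole hypothesis are used, guaranteeing that $r$ is a smooth exhaustion function away from $x_0$ and that $B_{r_2}(x_0) \uparrow M$. Beyond that, the proof is essentially a one-line consequence of Theorem~\ref{thm-b}: the substantive analytic work (the stress-energy estimates, the Hessian comparison via Lemma~\ref{lem-e}, and the curvature bookkeeping leading to $\lambda = D - 2C_1 > 0$) has already been carried out in establishing the monotonicity formula, so here one simply reads off the conclusion. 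I do not anticipate any genuine difficulty; the write-up should be short.
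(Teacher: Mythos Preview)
Your proposal is correct and follows exactly the same approach as the paper: fix $r_1$, let $r_2\to\infty$ in the monotonicity formula \eqref{eq-w}, use the growth hypothesis \eqref{eq-2f} to kill the right-hand side, and conclude $\bar\partial u\equiv 0$. The paper's proof is in fact the one-line version of what you wrote out in detail.
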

\begin{proof}
	From Theorem~\eqref{thm-b} and condition~\eqref{eq-2f}, in monotonicity formula \eqref{eq-w}, letting $r_2\to \infty $, we immediately obtain that $u$ is holomorphic.
\end{proof}

\begin{theorem}\label{thm-f}
	Let $(M,J,g)$ be a $2m$-dimensional complete almost Hermitian manifold with a pole $x_0$ and $r$ the distance function relative to $x_0$. Suppose $u:M\to N$ is a Hermitian pluriharmonic map into a K\"ahler manifold $N$ and it satisfies $\Im \NN_J\subset \Ker \dd u$.
	\begin{enumerate}[(1)]
	    \item Suppose the radial curvature $K_r$ of $M$ satisfies $K_r\le -\beta^2$ with $\beta >0$. If $|V|\le \coth(\beta r)C_2$, where $C_2<(m-1)\beta $, and
        \begin{equation}\label{eq-2g}
        	\int_{B_r(x_0)}|\bar \partial u|^2=o\left(e^{[(2m-3)\beta -2C_2]r}\right)\quad \mbox{as}\ r\to \infty ,
        \end{equation}
        then $u$ is holomorphic.
		\item Suppose the radial curvature $K_r$ of $M$ satisfies $K_r\le -a^2/(1+r^2)$ with $a>0$. If $|V|\le \frac{C_3}{r+1}$, where $C_3<(m-1)A$, and
		\begin{equation}\label{eq-2h}
		    \int_{B_r(x_0)}|\bar \partial u|^2=o\left((1+r)^{(2m-3)A+1-2C_3}\right)\quad \mbox{as}\ r\to \infty ,
		\end{equation}
		then $u$ is holomorphic.
	\end{enumerate}
\end{theorem}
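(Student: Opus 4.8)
The plan is to use the monotonicity formulae of Theorem~\ref{thm-c}, parts (1) and (2), exactly as Theorem~\ref{thm-e} used Theorem~\ref{thm-b}: derive from the monotonicity inequality a lower bound for the growth of a weighted integral of $|\bar\partial u|^2$, then show the hypothesized upper growth bound forces $|\bar\partial u|^2\equiv 0$, i.e.\ $u$ is holomorphic. First, for case (1), recall from \eqref{eq-2a} that for $0<r_1\le r_2$,
\[
\frac{1}{\sinh^{2(m-1)-2C_2/\beta}(\beta r_1)}\int_{B_{r_1}(x_0)}\cosh(\beta r)|\bar\partial u|^2
\le
\frac{1}{\sinh^{2(m-1)-2C_2/\beta}(\beta r_2)}\int_{B_{r_2}(x_0)}\cosh(\beta r)|\bar\partial u|^2 .
\]
Fix $r_1$; if $\int_{B_{r_1}(x_0)}\cosh(\beta r)|\bar\partial u|^2>0$, then for all large $r_2$ we get $\int_{B_{r_2}(x_0)}\cosh(\beta r)|\bar\partial u|^2\ge c\,\sinh^{2(m-1)-2C_2/\beta}(\beta r_2)$ for some $c>0$. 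Since $\cosh(\beta r)\ge \tfrac12 e^{\beta r}$ on $B_{r_2}$ near the boundary and we only need a crude comparison, I would instead bound $\int_{B_{r_2}}\cosh(\beta r)|\bar\partial u|^2 \le \cosh(\beta r_2)\int_{B_{r_2}}|\bar\partial u|^2 \le e^{\beta r_2}\int_{B_{r_2}}|\bar\partial u|^2$, while $\sinh^{2(m-1)-2C_2/\beta}(\beta r_2)$ grows like a constant times $e^{[2(m-1)-2C_2/\beta]\beta r_2}=e^{[2(m-1)\beta-2C_2]r_2}$. Combining, $\int_{B_{r_2}(x_0)}|\bar\partial u|^2\ge c'\,e^{[2(m-1)\beta-2C_2]r_2-\beta r_2}=c'\,e^{[(2m-3)\beta-2C_2]r_2}$, which contradicts the growth assumption \eqref{eq-2g}. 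Hence $\int_{B_{r_1}(x_0)}\cosh(\beta r)|\bar\partial u|^2=0$ for every $r_1$, so $\bar\partial u\equiv 0$ and $u$ is holomorphic.

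For case (2) the argument is structurally identical, starting from \eqref{eq-2b}:
\[
\frac{1}{(1+r_1)^{2(m-1)A-2C_3}}\int_{B_{r_1}(x_0)}(1+r)^{A-1}|\bar\partial u|^2
\le
\frac{1}{(1+r_2)^{2(m-1)A-2C_3}}\int_{B_{r_2}(x_0)}(1+r)^{A-1}|\bar\partial u|^2 .
\]
Assuming $\int_{B_{r_1}(x_0)}(1+r)^{A-1}|\bar\partial u|^2>0$, one gets $\int_{B_{r_2}(x_0)}(1+r)^{A-1}|\bar\partial u|^2\ge c\,(1+r_2)^{2(m-1)A-2C_3}$. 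On $B_{r_2}(x_0)$ we have $(1+r)^{A-1}\le (1+r_2)^{A-1}$ when $A\ge 1$ (and $(1+r)^{A-1}\le 1$ when $A<1$; in either case $(1+r)^{A-1}\le (1+r_2)^{\max\{A-1,0\}}\le (1+r_2)^{A-1}$ after noting $A\ge1$ since $A=\tfrac{1+\sqrt{1+4a^2}}{2}>1$), so $\int_{B_{r_2}(x_0)}|\bar\partial u|^2\ge c\,(1+r_2)^{2(m-1)A-2C_3-(A-1)}=c\,(1+r_2)^{(2m-3)A+1-2C_3}$, contradicting \eqref{eq-2h}. Therefore $\bar\partial u\equiv 0$, and $u$ is holomorphic.

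The only delicate point is matching the exponents in the growth hypotheses \eqref{eq-2g} and \eqref{eq-2h} with what the monotonicity formulae actually yield after stripping off the weight factors $\cosh(\beta r)$ and $(1+r)^{A-1}$ from the integrals — this is exactly where the ``$2m-3$'' (rather than ``$2m-2$'' or ``$2m-1$'') appears, reflecting the loss of one power to the weight. I expect this bookkeeping to be the main (and essentially only) obstacle; once the weight is accounted for, positivity of the integral on any fixed ball propagates via monotonicity to a lower bound incompatible with the assumed sublinear-in-the-critical-rate growth, forcing $\bar\partial u\equiv0$. I would also remark, as in Theorem~\ref{thm-e}, that since $N$ is K\"ahler and $\Im\NN_J\subset\Ker\dd u$, vanishing of $\bar\partial u$ is precisely the statement that $u$ is holomorphic in the sense defined in Section~\ref{sub:the_stress_energy_tensors}.
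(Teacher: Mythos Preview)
Your proposal is correct and follows essentially the same approach as the paper: both arguments use the monotonicity formulae \eqref{eq-2a} and \eqref{eq-2b}, strip off the weight on the right-hand side via the crude bound $\int_{B_{r_2}}w(r)|\bar\partial u|^2\le w(r_2)\int_{B_{r_2}}|\bar\partial u|^2$ (with $w=\cosh(\beta r)$ or $(1+r)^{A-1}$), and compare the resulting exponent with the growth hypotheses \eqref{eq-2g} and \eqref{eq-2h}. The paper phrases this as ``the right-hand side tends to $0$ as $r_2\to\infty$'' while you phrase it as a contradiction argument, but the content and the key exponent bookkeeping (the appearance of $2m-3$ after losing one factor to the weight) are identical.
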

\begin{proof}
	(1) Noting that the right-hand side of monotonicity formula \eqref{eq-2a} can be enlarged to
    \begin{align*}
    	\frac{1}{\sinh^{2(m-1)-2C_2/\beta }(\beta r_2)}\int_{B_{r_2}(x_0)}\cosh(\beta r)|\bar \partial u|^2
    	\le &\frac{\cosh(\beta r_2)}{\sinh^{2(m-1)-2C_2/\beta }(\beta r_2)}\int_{B_{r_2}(x_0)}|\bar \partial u|^2\\
    	= &\coth(\beta r_2)\left(\frac{e^{\beta r_2}}{\sinh(\beta r_2)}\right)^{e^{2m-3-2C_2/\beta }}\frac{\int_{B_{r_2}(x_0)}|\bar \partial u|^2}{e^{[(2m-3)\beta -2C_2]r}},
    \end{align*}
    and then letting $r_2\to \infty $, combined with condition~\eqref{eq-2g}, we thus prove that $u$ is holomorphic.

    (2) Noting that the right-hand side of monotonicity formula \eqref{eq-2b} can be enlarged to
    \begin{align*}
    	\frac{1}{(1+r_2)^{2(m-1)A-2C_3}}\int_{B_{r_2}(x_0)}(1+r)^{A-1}|\bar \partial u|^2
    	\le &\frac{(1+r_2)^{A-1}}{(1+r_2)^{2(m-1)A-2C_3}}\int_{B_{r_2}(x_0)}|\bar \partial u|^2\\
    	= &\frac{1}{(1+r_2)^{(2m-3)A+1-2C_3}}\int_{B_{r_2}(x_0)}|\bar \partial u|^2,
    \end{align*}
    and then letting $r_2\to \infty $, combined with condition~\eqref{eq-2h}, we thus prove that $u$ is holomorphic.
\end{proof}

\begin{remark}
	When the domain manifold is K\"ahler or Hermitian, Theorem~\ref{thm-f} improves the corresponding results in \cite{MR3149846} and \cite{MR3047049} under conditions $K_r\le -\beta^2<0$ and $K_r\le -\frac{a^2}{1+r^2}<0$. For instance, in the K\"ahler case (one has $C_3=0$), as $r\to \infty $, \cite{MR3149846} requires polynomial growth of the partial energy under the former condition, and $o(r^{\Lambda_0})$ growth under the latter condition, where $\Lambda_0$ is any positive number less than $1+(2m-3)A$.
\end{remark}

\begin{remark}
	Starting from the monotonicity formula that holds outside a compact subset, corresponding to the cases of (a) and (b) in Theorem~\ref{thm-d}, under conditions \eqref{eq-2g} and \eqref{eq-2h} respectively, it can be proven that $u$ is holomorphic. In fact, similar to the proof of Theorem~\ref{thm-f}, we can derive that $u$ is holomorphic in the open set $M\setminus B_{R_0}(x_0)$, which is equivalent to $\sigma =0$ in $M\setminus B_{R_0}(x_0)$. Furthermore, by Lemmas~\ref{lem-a} and \ref{lem-c}, we obtain that
	\begin{align*}
		|\dd \sigma |^2+|\delta \sigma |^2
		=|\sigma (V)|^2\le |V|^2_K|\sigma |^2
	\end{align*} 
	holds on any compact subset of $M$. Thus, according to the unique continuation property in \cite{MR140031}, $\sigma \equiv 0$ on $M$, meaning that $u$ is holomorphic on the whole $M$. 
\end{remark}

\begin{remark}
	In the two theorems above, if those growth conditions are applied separately to the partial energy $E'$, the corresponding anti-holomorphicity can be obtained. As a corollary, if those growth conditions are applied separately to the energy $E$, the corresponding constancy can be obtained.
\end{remark}

\begin{remark}
	Noticing that Hermitian pluriharmonic maps are $V$-harmonic (Remark~\ref{rem-c}), using the method from \cite{MR895408}, under the appropriate length condition for $V$ and some radial curvature pinching conditions, one can also derive some monotonicity formulae for Hermitian harmonic maps into a K\"ahler manifold. Furthermore, under suitable energy growth assumptions, constancy results can be obtained.
\end{remark}


\end{document}